\renewcommand\footnotemark{}
\begin{document}

\title{Stirling's approximation and a hidden link between two of Ramanujan's approximations}

\author{
  Cormac ~O'Sullivan\footnote{{\it Date:}Feb 9, 2023.
\newline \indent \ \ \
  {\it 2020 Mathematics Subject Classification:} 41A60, 33B15, 11B73
  \newline \indent \ \ \
Support for this project was provided by a PSC-CUNY Award, jointly funded by The Professional Staff Congress and The City
\newline \indent \ \ \
University of New York.}
  }

\date{}

\maketitle

\def\s#1#2{\langle \,#1 , #2 \,\rangle}

\def\F{{\frak F}}
\def\C{{\mathbb C}}
\def\R{{\mathbb R}}
\def\Z{{\mathbb Z}}
\def\Q{{\mathbb Q}}
\def\N{{\mathbb N}}
\def\G{{\Gamma}}
\def\GH{{\G \backslash \H}}
\def\g{{\gamma}}
\def\L{{\Lambda}}
\def\ee{{\varepsilon}}
\def\K{{\mathcal K}}
\def\Re{\mathrm{Re}}
\def\Im{\mathrm{Im}}
\def\PSL{\mathrm{PSL}}
\def\SL{\mathrm{SL}}
\def\Vol{\operatorname{Vol}}
\def\lqs{\leqslant}
\def\gqs{\geqslant}
\def\sgn{\operatorname{sgn}}
\def\res{\operatornamewithlimits{Res}}
\def\li{\operatorname{Li_2}}
\def\lip{\operatorname{Li}'_2}
\def\pl{\operatorname{Li}}

\def\ei{\mathrm{Ei}}

\def\clp{\operatorname{Cl}'_2}
\def\clpp{\operatorname{Cl}''_2}
\def\farey{\mathscr F}

\def\dm{{\mathcal A}}
\def\ov{{\overline{p}}}
\def\ja{{K}}

\def\nb{{\mathcal B}}
\def\cc{{\mathcal C}}
\def\nd{{\mathcal D}}

\newcommand{\stira}[2]{{\genfrac{[}{]}{0pt}{}{#1}{#2}}}
\newcommand{\stirb}[2]{{\genfrac{\{}{\}}{0pt}{}{#1}{#2}}}
\newcommand{\eu}[2]{{\left\langle\!\! \genfrac{\langle}{\rangle}{0pt}{}{#1}{#2}\!\!\right\rangle}}
\newcommand{\eud}[2]{{\big\langle\! \genfrac{\langle}{\rangle}{0pt}{}{#1}{#2}\!\big\rangle}}
\newcommand{\norm}[1]{\left\lVert #1 \right\rVert}

\newcommand{\e}{\eqref}
\newcommand{\bo}[1]{O\left( #1 \right)}


\newtheorem{theorem}{Theorem}[section]
\newtheorem{lemma}[theorem]{Lemma}
\newtheorem{prop}[theorem]{Proposition}
\newtheorem{conj}[theorem]{Conjecture}
\newtheorem{cor}[theorem]{Corollary}
\newtheorem{assume}[theorem]{Assumptions}
\newtheorem{adef}[theorem]{Definition}

\numberwithin{figure}{section}
\numberwithin{table}{section}


\newcounter{counrem}
\newtheorem{remark}[counrem]{Remark}

\renewcommand{\labelenumi}{(\roman{enumi})}
\newcommand{\spr}[2]{\sideset{}{_{#2}^{-1}}{\textstyle \prod}({#1})}
\newcommand{\spn}[2]{\sideset{}{_{#2}}{\textstyle \prod}({#1})}

\numberwithin{equation}{section}

\let\originalleft\left
\let\originalright\right
\renewcommand{\left}{\mathopen{}\mathclose\bgroup\originalleft}
\renewcommand{\right}{\aftergroup\egroup\originalright}

\bibliographystyle{alpha}

\begin{abstract}
A conjectured relation between Ramanujan's asymptotic approximations to the exponential function and the exponential integral is established. The proof involves Stirling numbers, second-order Eulerian numbers, modifications of both of these, and Stirling's approximation to the gamma function. Our work provides new information about the coefficients in Stirling's approximation and their connection to  Ramanujan's approximation coefficients. A  more analytic second proof of the main result is also included in an appendix.
\end{abstract}

\section{Introduction}
From about 1903 to 1914 Ramanujan listed his many mathematical discoveries in two main notebooks.
In Entry 48 of Chapter 12 of his second notebook,  he considered the Taylor expansion of $e^n$. Writing
\begin{equation}\label{rb}
  1+\frac{n}{1!}+\frac{n^2}{2!}+ \cdots +\frac{n^{n-1}}{(n-1)!} +  \frac{n^n}{n!}\theta_n = \frac{e^n}2
\end{equation}
to define $\theta_n$, he computed an asymptotic expansion that is equivalent to
\begin{equation}\label{rb2}
   \theta_n  = \frac{1}3+ \frac{4}{135 n}-\frac{8}{2835 n^2}-\frac{16}{8505 n^3}+O\left( \frac{1}{n^4}\right),
\end{equation}
as $n \to \infty$.
His next result, Entry 49, is quite surprising as it uses the reciprocals of the terms on the left of \e{rb}. It can be expressed as
\begin{equation}\label{ei}
  1+\frac{1!}{n}+\frac{2!}{n^2}+ \cdots +\frac{(n-1)!}{n^{n-1}} +  \frac{n!}{n^n}\Psi_n = \frac{n \ei(n)}{e^{n}},
\end{equation}
for $\ei(n)$ the exponential integral, and the asymptotic expansion of $\Psi_n$ is given as
\begin{equation}\label{ei2}
   \Psi_n  = -\frac{1}3+ \frac{4}{135 n}+\frac{8}{2835 n^2}-\frac{16}{8505 n^3}+O\left( \frac{1}{n^4}\right).
\end{equation}
For Berndt's discussion of these entries see \cite[pp. 181--184]{Ber89}. The expansions \e{rb2} and \e{ei2} may be continued to any order and we label the coefficients as $\rho_r$ and $\psi_r$ respectively:
\begin{equation} \label{iou}
  \theta_n \sim \sum_{r\gqs 0} \frac{\rho_r}{n^r}, \qquad \Psi_n \sim \sum_{r\gqs 0} \frac{\psi_r}{n^r}.
\end{equation}
 Ramanujan in fact found five terms in the expansion \e{rb2} and three terms in \e{ei2}. He must have noticed the agreement of the coefficients, though only had the evidence of the first three to compare. 


\begin{theorem} \label{ky}
 We have $\psi_r = (-1)^{r+1}\rho_r$ for all $r\gqs 0$.
\end{theorem}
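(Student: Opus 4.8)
The plan is to turn both halves of Theorem~\ref{ky} into asymptotic statements about the incomplete gamma function and the exponential integral, to split off the contribution of Stirling's series for $\Gamma$ (which is odd in $n$ and so already carries half of the asserted symmetry), and then to reduce the remaining identity to a combinatorial one about suitably modified Stirling and second-order Eulerian numbers.

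First I would rewrite the finite sums. The truncated exponential series is an incomplete gamma value, $\sum_{k=0}^{n-1} n^k/k! = e^n\,\Gamma(n,n)/\Gamma(n)$, so \e{rb} gives $\theta_n = (n!/n^n)\,e^n\bigl(1/2 - \Gamma(n,n)/\Gamma(n)\bigr)$. Dually, $\sum_{k=0}^{n-1} k!/n^k$ is exactly what one obtains by truncating the divergent series $n\,\ei(n)/e^n \sim \sum_{k\gqs 0} k!/n^k$ after $n$ terms, and the exact remainder equals $n^{-n}\,\mathrm{P.V.}\!\int_0^\infty t^n e^{-t}/(1-t/n)\,dt$, so \e{ei} gives $\Psi_n = (1/n!)\,\mathrm{P.V.}\!\int_0^\infty t^n e^{-t}/(1-t/n)\,dt$. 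Now insert Stirling's approximation in the form $n! = \sqrt{2\pi n}\,(n/e)^n e^{\mu(n)}$, where $\mu(n)\sim\sum_{k\gqs 1}B_{2k}/\bigl(2k(2k-1)n^{2k-1}\bigr)$ involves only odd powers of $n$. This exposes a factor $e^{\mu(n)}$ inside $\theta_n$ and $e^{-\mu(n)}$ inside $\Psi_n$; since $\mu(-n)=-\mu(n)$, the substitution $n\mapsto -n$ interchanges these two factors, so $\psi_r=(-1)^{r+1}\rho_r$ is equivalent to the claim that, apart from the Stirling factor, the two remaining pieces transform into one another up to sign under $n\mapsto -n$. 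Here I would also need, and prove, that the coefficients of $e^{\mu(n)}$ — equivalently the coefficients in Stirling's series for $\Gamma$ — can themselves be written in terms of second-order Eulerian numbers; this is the ``new information'' promised in the abstract.

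Next I would expand the two non-Stirling pieces. For $\theta_n$, the quantity $1/2 - \Gamma(n,n)/\Gamma(n)$ has a classical expansion obtained by the Temme-type substitution that makes the integrand Gaussian; its coefficients are, up to normalization, second-order Eulerian numbers, and combining this with $e^{\mu(n)}$ and $\sqrt{2\pi n}$ produces an explicit finite-sum formula for $\rho_r$ built from a modified version of those numbers. For $\Psi_n$, a Watson-lemma analysis of the principal-value integral — substitute $t=n+s$, expand $(1+s/n)^n e^{-s}$ in powers of $1/n$, and integrate the resulting principal-value moments against a Gaussian — yields an explicit formula for $\psi_r$, this time in terms of (modified) Stirling numbers; conveniently, the principal value against the Gaussian annihilates the odd corrections, which is why $\Psi_n$ expands in integer powers of $1/n$ with no fractional terms. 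At this point Theorem~\ref{ky} has been reduced to a single identity relating the two closed forms.

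The main obstacle is that final identity. The two expansions originate from structurally dissimilar integrals — a difference of two Laplace integrals for $\theta_n$ against a single principal-value integral for $\Psi_n$ — so the agreement is not formal and cannot be read off term by term. The route I would take is to prove directly that the formal replacement $n\mapsto -n$ carries the asymptotic series of $\theta_n$ to minus that of $\Psi_n$: the Stirling factors swap as explained, and what is left is a sign-reversing correspondence between the modified second-order Eulerian numbers that govern $\theta_n$ and the modified Stirling numbers that govern $\Psi_n$. I expect the genuine work to be in defining these two modified triangles correctly and establishing the generating-function (or recurrence) identity that links them, after which $\psi_r=(-1)^{r+1}\rho_r$ follows at once.
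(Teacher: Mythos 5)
There is a genuine gap. Your outline correctly identifies two structural ingredients that the paper also exploits --- the fact that the logarithm of Stirling's series contains only odd powers of $1/n$, so the Stirling factor already ``carries half of the asserted symmetry'' (in the paper this is \e{logg} and its consequences $g(-z)g(z)=1$ in \e{ggd} and the convolution equivalence \e{iff}), and the eventual reduction to a combinatorial identity between modified Stirling-type and Eulerian-type triangles --- but the entire mathematical content of the theorem is deferred to the final ``sign-reversing correspondence,'' which you do not prove and which is not a routine generating-function or recurrence check. Concretely: after all reductions the statement becomes an identity such as \e{ccj} for the coefficients $c_n$ of Watson's function $U$ (equivalently \e{top3}), and verifying it forces one to evaluate the sums $\alpha(j)$, $\alpha^*(j)$, $\beta(j)$ built from Stirling cycle numbers and second-order Eulerian numbers, to prove $\alpha(j)=B_{j+1}/(j+1)$, and to establish Majer's identity \e{move} expressing an alternating sum of second-order Eulerian numbers against reciprocal binomial coefficients as $2B_{n+1}$ --- a statement that was an open question until very recently. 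Nothing in your sketch anticipates that Bernoulli numbers must enter at all, nor supplies a mechanism for proving the linking identity (the paper uses the integral representations of Proposition \ref{val} for $F_n(t)=E_n(t)/(t-1)^{2n}$ together with integration by parts via \e{rfn}). The clause ``after which $\psi_r=(-1)^{r+1}\rho_r$ follows at once'' is precisely the step that consumes Sections 5--8 of the paper.

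Two secondary points. First, the formal substitution $n\mapsto -n$ is not a legitimate operation on an asymptotic expansion; it must be replaced by identities of formal power series, which is what \e{ggd} and \e{iff} accomplish, and your reduction as stated is closer to a restatement of the theorem than to a proof step. Second, the claim that the Temme-type expansion of $\tfrac12-\G(n,n)/\G(n)$ has coefficients that are ``up to normalization, second-order Eulerian numbers'' is too optimistic: $\rho_r$ is given by the De Moivre polynomial formula \e{vbw}, equivalently by the coefficients $c_{2r+2}$ of $U$, and the second-order Eulerian numbers appear only after the further transformations of Sections 5--6, where they govern the auxiliary quantities $\alpha(j)$ and $\beta(j)$ rather than $\rho_r$ itself.
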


This relation is the hidden link referred to in the title and does not appear to have been discussed before it recently became Conjecture 8.4 of \cite{OSra}. The proof we have found   also seemed quite well concealed, and requires a third sequence $\g_r$ from possibly the most important asymptotic expansion of all:
\begin{equation} \label{gmx}
  \G(n+1)= \sqrt{2\pi n}\left(\frac{n}{e}\right)^n \left(\g_0+\frac{\g_1}{n}+\frac{\g_2}{n^2}+ \cdots + \frac{\g_{R-1}}{n^{R-1}} +O\left(\frac{1}{n^{R}}\right)\right),
\end{equation}
as $n \to \infty$.
This is Stirling's approximation for the gamma function\footnote{Ramanujan also had his own version of Stirling's approximation; see \cite{Ka01} for example}, and in the course of our work we will gain more information about the Stirling coefficients $\g_r$. This sequence begins
\begin{equation*}
  \g_0 = 1, \quad \g_1 = \frac{1}{12}, \quad \g_2 = \frac{1}{288}, \quad \g_3 = -\frac{139}{51840}.
\end{equation*}
 In fact, up to a normalization, the numbers $\g_r$ and $\rho_r$ alternate in the series expansion of Watson's function $U(t)$ from \cite{Wa29}, which for $t\gqs 0$ satisfies
\begin{equation}\label{wu}
  U(0)=1, \quad U(t)\gqs 1, \quad U(t)+U'(t)=U(t)U'(t).
\end{equation}
See Section \ref{proo} where many properties of the sequences $\g_r$ and $\rho_r$ are assembled.

Other key ingredients we will need are the Bernoulli numbers $B_n$, the Stirling cycle numbers and the second-order Eulerian numbers $\eud{n}{k}$. These last may be defined recursively, for integers $n$ and $k$ with $n \gqs 0$, by the initial condition $\eud{0}{k}=\delta_{k,0}$, with the Kronecker delta, and  the relation 
\begin{equation} \label{eur}
  \eu{n+1}{k} = (k+1)\eu{n}{k}+(2n+1-k)\eu{n}{k-1},
\end{equation}
as in \cite[Sect. 6.2]{Knu}.
In 2010,  Majer raised the question  in \cite{mo} if
\begin{equation}\label{move}
   \sum_{k= 0}^n (-1)^k \eu{n}{k}\binom{2n+1}{k+1}^{-1} = 2B_{n+1} \qquad \text{for all $n\gqs 0$}
\end{equation}
is true. This question was also highlighted in \cite{RU19}. The first proof of \e{move} has only recently been provided by Fu in \cite{Fu}, where a connection between N\"orlund polynomials and second-order Eulerian numbers is found. One of the steps we use to establish Theorem \ref{ky} requires \e{move}, and we find a different natural proof of it in Theorem \ref{mov}.

Our proof of Theorem \ref{ky} is essentially combinatorial. During the reviewing process for this paper, one of the anonymous referees provided an elegant analytic proof. This employs  the Laplace transform and makes use of the properties of $U(t)$ and various special functions. We give this alternate approach  in an appendix and warmly thank the referee for providing their insights  and a contrasting viewpoint.

\section{Initial results} \label{ini}
We begin by quoting the results needed from \cite{OSra}.
The asymptotic expansion of $\theta_n$ is established there  using Perron's saddle-point method. This is the $v=0$ case of \cite[Thm. 4.2]{OSra} with the coefficients $\rho_r$ given explicitly.  It is convenient to put $\hat\rho_r:=\rho_r -\delta_{r,0}$ so that  $\hat\rho_0=-2/3$. The asymptotic expansion
of $\G(n+1)$ is shown similarly in \cite[Cor. 4.3]{OSra}, and rewriting the formulas  slightly produces
\begin{align}
  \hat\rho_r & = (2r)!!\sum_{k=0}^{2r+1} \binom{-r-1}{k} 2^k \dm_{2r+1,k}\left( \frac{1}{3}, \frac{1}{4}, \frac{1}{5}, \dots \right), \label{vbw}\\
  \g_r & = (2r-1)!!\sum_{k=0}^{2r}  \binom{-r-1/2}{k} 2^k \dm_{2r,k}\left( \frac{1}{3}, \frac{1}{4}, \frac{1}{5}, \dots \right), \label{vbw2}
\end{align}
where the usual double factorial notation  has
\begin{equation} \label{doub}
  n!! := \begin{cases}
  n(n-2) \cdots 5 \cdot 3 \cdot 1 & \text{ if $n$ is odd};\\
  n(n-2) \cdots 6 \cdot 4 \cdot 2 & \text{ if $n$ is even},
  \end{cases}
\end{equation}
for $n\gqs 1$, with $0!!=(-1)!!=1$.
 The De Moivre polynomial $\dm_{n,k}(a_1, a_2, a_3, \dots)$ means, in general, the coefficient of $x^n$ in the series expansion of
 \begin{equation} \label{bell}
    \left( a_1 x +a_2 x^2+ a_3 x^3+ \cdots \right)^k,
\end{equation}
and here $a_j$ is $1/(j+2)$ for $j \gqs 1$.
Additionally, the $v=0$ case of \cite[Prop. 8.3]{OSra} finds
\begin{equation} \label{gps}
  \G(n+1)  \Psi_n = \sqrt{2\pi n}\left(\frac{n}{e}\right)^n \left(\tau_0+\frac{\tau_1}{n}+\frac{\tau_2}{n^2}+ \cdots + \frac{\tau_{R-1}}{n^{R-1}} +O\left(\frac{1}{n^{R}}\right)\right)
\end{equation}
for
\begin{equation}\label{taur}
   \tau_r  = (2r-1)!! \sum_{k=0}^{2r+1}  \binom{-r-1/2}{k} 2^k \dm_{2r+1,k}\left( \frac{1}{3}, \frac{1}{4}, \frac{1}{5}, \dots \right).
\end{equation}
Hence \e{gmx} and \e{gps} imply
\begin{equation}\label{ins}
  \sum_{j=0}^r  \psi_j \g_{r-j} = \tau_r.
\end{equation}

Our next goal is to express Theorem \ref{ky} in terms of the quantities $\hat\rho_r$, $\g_r$ and $\tau_r$ that we have formulas for. This is the first of a sequence of equivalences that we will use.

\begin{prop} \label{roe}
Theorem \ref{ky} is equivalent to
\begin{equation}\label{alx}
  \g_r +\tau_r +\sum_{j=0}^r (-1)^j \hat\rho_j \g_{r-j} =0 \qquad \text{for all $r\gqs 0$.}
\end{equation}
\end{prop}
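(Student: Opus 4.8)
The plan is to convert the statement $\psi_r = (-1)^{r+1}\rho_r$ into a relation purely among the sequences $\g_r$, $\hat\rho_r$ and $\tau_r$ by using the generating-function identity \e{ins}. First I would record that $\psi_r \sim \Psi_n$ and $\rho_r \sim \theta_n$, but that the honest algebraic content we can touch is \e{ins}: the convolution $\sum_{j=0}^r \psi_j \g_{r-j} = \tau_r$. The idea is to substitute the conjectured values $\psi_j = (-1)^{j+1}\rho_j$ into this convolution and see what constraint on the known sequences must hold, then argue the converse — that this constraint, together with \e{ins} and the fact that $(\g_r)$ is invertible under Dirichlet-type (Cauchy) convolution since $\g_0=1$, forces $\psi_j = (-1)^{j+1}\rho_j$.

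Concretely, in terms of generating functions, set $G(x)=\sum \g_r x^r$, $P(x) = \sum \rho_r x^r$, $\hat P(x)=\sum \hat\rho_r x^r = P(x)-1$, $T(x)=\sum \tau_r x^r$, and $\Psi(x)=\sum\psi_r x^r$. Then \e{ins} says $\Psi(x)G(x) = T(x)$, so $\Psi(x) = T(x)/G(x)$ (legitimate as formal power series because $\g_0=1\ne 0$). The claim $\psi_r=(-1)^{r+1}\rho_r$ is the claim $\Psi(x) = -P(-x)$, i.e. $T(x)/G(x) = -P(-x)$, i.e. $T(x) = -P(-x)G(x) = -(1+\hat P(-x))G(x) = -G(x) - \hat P(-x)G(x)$. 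Reading off the coefficient of $x^r$, and noting that the coefficient of $x^r$ in $\hat P(-x)G(x)$ is $\sum_{j=0}^r (-1)^j \hat\rho_j \g_{r-j}$, this is exactly
\begin{equation*}
  \tau_r = -\g_r - \sum_{j=0}^r (-1)^j \hat\rho_j \g_{r-j},
\end{equation*}
which rearranges to \e{alx}. Since every step is an equivalence of formal power series (division by $G(x)$ is invertible), the chain runs both ways, giving the stated equivalence.

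The one point that needs a word of care — and the main (minor) obstacle — is the justification that the asymptotic relation $\Psi_n \sim \sum \psi_r/n^r$ interacts with \e{gmx} and \e{gps} exactly as a Cauchy product of formal power series, so that \e{ins} really is an identity of the coefficients rather than merely an asymptotic statement valid to finite order; this is standard for asymptotic expansions in powers of $1/n$, and in fact \e{ins} is already asserted in the excerpt, so I would simply cite it. After that, the proof of Proposition \ref{roe} is the formal-power-series manipulation above: assume Theorem \ref{ky}, substitute into \e{ins}, and obtain \e{alx}; conversely, assume \e{alx}, rebuild $T(x) = -P(-x)G(x)$, divide by $G(x)$ to get $\Psi(x) = -P(-x)$, and read off $\psi_r = (-1)^{r+1}\rho_r$. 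I would present it cleanly in generating-function language to make the invertibility of the convolution by $(\g_r)$ transparent.
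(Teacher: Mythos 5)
Your proof is correct and is essentially the paper's own argument in generating-function dress: both reduce Theorem \ref{ky} to \e{ins} together with the invertibility of convolution by $(\g_r)$ (the paper cites \e{ggd} and \e{iff} where you divide by $G(x)$ using $\g_0=1$, which is the same step), and then split off the $j=0$ term to pass from $\rho_j$ to $\hat\rho_j$ and land on \e{alx}. No gaps.
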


Before giving the proof, some properties of the Stirling coefficients are required. Define the formal series
\begin{equation} \label{gz}
  g(z):=\g_0+\g_1 z+\g_2 z^2 +\g_3 z^3+ \cdots .
\end{equation}
Euler-Maclaurin summation  may be used to give the well-known asymptotic expansion of $\log \G(z)$, as in \cite[Appendix D]{AAR} for example. This produces
\begin{equation} \label{logg}
  \log g(z) =\frac{B_2}{2\cdot 1} z + \frac{B_4}{4\cdot 3} z^3 + \frac{B_6}{6\cdot 5} z^5 + \cdots .
\end{equation}
(The Bernoulli numbers $B_j$ for odd $j\gqs 3$ vanish.) Therefore
\begin{align}\label{hx}
  \frac{g'(z)}{g(z)} & = \frac{B_2}{2}  + \frac{B_4}{4} z^2 + \frac{B_6}{6} z^4 + \cdots
  ,\\
  r \g_r & = \sum_{j=0}^{r-1} \frac{B_{j+2}}{j+2} \g_{r-1-j} \qquad (r\gqs 0). \label{hy}
\end{align}
Since \e{logg} has only odd powers,
\begin{equation*}
  1/g(z) = \exp(-\log g(z))=\exp(\log g(-z))= g(-z).
\end{equation*}
Then $g(-z) g(z)=1$ is equivalent to
\begin{equation} \label{ggd}
  \sum_{j=0}^r (-1)^j \g_j \g_{r-j} = \delta_{r,0},
\end{equation}
and indeed, for any sequence $a_j$,
\begin{equation} \label{iff}
  \sum_{j=0}^r  \g_j a_{r-j} = b_r \quad \iff \quad \sum_{j=0}^r (-1)^j \g_j b_{r-j} = a_r.
\end{equation}
Together, \e{hy} and \e{iff} imply an identity we will need in Section \ref{ber}:
\begin{equation}\label{iff2}
  \sum_{j=0}^{r} (-1)^{j} (j+1) \g_{j+1}\g_{r-j} =  \frac{B_{r+2}}{r+2} \qquad(r\gqs 0).
\end{equation}

\begin{proof}[Proof of Proposition \ref{roe}]
Using \e{ggd} and \e{ins} we have
\begin{align*}
  (-1)^{r+1}\rho_r   = \psi_r  & \iff \sum_{j=0}^r  (-1)^{j+1} \rho_j \g_{r-j} = \sum_{j=0}^r  \psi_j \g_{r-j}\\
 & \iff \sum_{j=0}^r  (-1)^{j+1} \rho_j \g_{r-j} = \tau_r,
\end{align*}
and this last is equivalent to \e{alx}.
\end{proof}

\section{Lagrange inversion} \label{ut}
A formal power series $F(x)=\sum_{j=0}^\infty a_j x^j \in \C[[x]]$ with $a_0=0$ has a compositional inverse $G(x)\in \C[[x]]$ if and only if $a_1\neq 0$. This means $F(G(x))=x=G(F(x))$, where $G(x)$ is  necessarily unique with no constant term. See \cite[Sect. 3]{odm}, for example, for these basic results. The following general form of Lagrange inversion is Theorem 2.1.1 and (2.1.7) of \cite{Ge16}, and proved there in a variety of ways that include Jacobi's classic short proof. Note that a formal Laurent series takes the form $\sum_{j=n_0}^\infty a_j x^j$, for some $n_0\in \Z$, and the operator $[x^n]$ extracts the coefficient of $x^n$ from a Laurent series.

\begin{theorem}[Lagrange inversion]
Let $F$ be a formal power series with no constant term and compositional inverse $G$. Then for all integers $n$ and all formal Laurent series $\phi$,  we have
\begin{equation}\label{lin}
  [x^n] \phi(G(x)) = [x^{-1}] \frac{\phi(x) F'(x)}{F(x)^{n+1}}.
\end{equation}
\end{theorem}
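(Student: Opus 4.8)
The plan is to give the classical residue-calculus argument (in substance Jacobi's short proof), which is entirely formal. Write $\res_x H(x) := [x^{-1}]H(x)$ for the residue of a formal Laurent series $H$. Everything rests on two elementary facts. First, $\res_x H'(x)=0$ for every formal Laurent series $H=\sum_j a_j x^j$, since the coefficient of $x^{-1}$ in $\sum_j (j+1)a_{j+1}x^j$ comes only from $j=-1$ and equals $0\cdot a_0=0$. Second, the change-of-variables formula: if $F$ is a formal power series with no constant term and $F'(0)\neq 0$, then
\[
  \res_x K(x) \;=\; \res_y\bigl( K(F(y))\,F'(y)\bigr)
\]
for every formal Laurent series $K$. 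Here $K(F(y))$ is a legitimate Laurent series because $F(y)=yU(y)$ with $U(0)\neq 0$, so $F(y)^m$ has order exactly $m$ for every $m\in\Z$ and each power of $y$ receives only finitely many contributions; the formal derivative commutes with these operations once this order-one condition is in force.

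To establish the change-of-variables formula, by linearity it suffices to treat $K(x)=x^m$ for $m\in\Z$. If $m\neq -1$, then $x^m F'(x)=\tfrac1{m+1}\bigl(F(x)^{m+1}\bigr)'$, whose residue is $0$ by the first fact, while also $\res_x x^m=0$. If $m=-1$, writing $F(y)=yU(y)$ gives $F'(y)/F(y)=1/y+U'(y)/U(y)$, and since $U(0)\neq 0$ the second summand is an ordinary power series with residue $0$, so $\res_y\bigl(F'(y)/F(y)\bigr)=1=\res_x x^{-1}$. This proves the formula.

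Now apply it with $K(x):=\phi(G(x))\,x^{-n-1}$, which is a well-defined Laurent series because $G$ has order exactly one (from $F(G(x))=x$ one gets $G'(0)=1/F'(0)\neq 0$). The left side equals $\res_x\bigl(\phi(G(x))x^{-n-1}\bigr)=[x^n]\phi(G(x))$, the left side of \e{lin}. Using $G(F(y))=y$, the right side equals
\[
  \res_y\Bigl(\phi\bigl(G(F(y))\bigr)\,F(y)^{-n-1}\,F'(y)\Bigr)
  \;=\; \res_y \frac{\phi(y)\,F'(y)}{F(y)^{n+1}}
  \;=\; [y^{-1}]\frac{\phi(y)\,F'(y)}{F(y)^{n+1}},
\]
which is precisely the right side of \e{lin}. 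This completes the proof.

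\textbf{Main obstacle.} The only genuine work is bookkeeping rather than ideas: one must verify that $F'(0)\neq 0$ (equivalently $a_1\neq 0$) is exactly the hypothesis guaranteeing the existence of $G$, that the composition of a formal Laurent series with an order-one formal power series is again a well-defined formal Laurent series, and that formal differentiation and coefficient extraction behave as expected under these compositions. All of this is standard once the order-one condition is isolated, so I would simply cite the basic formal-power-series facts (e.g. from \cite{odm}) and keep the argument above at the stated level of detail.
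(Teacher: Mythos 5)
The paper does not prove this theorem itself --- it quotes it from \cite{Ge16}, noting that it is established there by several methods including Jacobi's classic residue argument, and your proof is precisely that argument, correctly executed (including the two points that genuinely need checking: that $\res H'=0$ and that composition with an order-one series is legitimate). The only blemish is a typo in the $m\neq -1$ case of the change-of-variables lemma: the identity should read $F(x)^m F'(x)=\tfrac{1}{m+1}\bigl(F(x)^{m+1}\bigr)'$ rather than $x^m F'(x)=\cdots$, which is clearly what you intended since the quantity whose residue you are computing is $K(F(y))F'(y)=F(y)^m F'(y)$.
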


Taking $\phi(x)=x^r$ and $\phi(x)=F(x)/(x F'(x))$ in \e{lin} gives, respectively,
\begin{align}\label{xla}
  n[x^n] G(x)^r & = r[x^{n-r}]\left(\frac x{F(x)}\right)^n \qquad (n, r \in \Z, \ r\neq 0),\\
\label{xla2}
  [x^{n-1}] \frac{G'(x)}{G(x)} & = [x^{n}]\left(\frac x{F(x)}\right)^n \qquad (n \in \Z).
\end{align}
We will use \e{xla} and \e{xla2} below after expressing $\hat\rho_r$, $\g_r$ and $\tau_r$ in terms of power series.
With \e{vbw} we have
\begin{align*}
  \hat\rho_r & = (2r)!!\sum_{k=0}^{2r+1} \binom{-r-1}{k} 2^k \left[ x^{2r+1}\right] S(x)^k \\
  & = (2r)!! \left[ x^{2r+1}\right] \sum_{k=0}^{2r+1} \binom{-r-1}{k} (2S(x))^k
   = (2r)!! \left[ x^{2r+1}\right] (1+2S(x))^{-r-1},
\end{align*}
for the series
\begin{equation*}
  S(x):= \frac{x}{3}+ \frac{x^2}{4}+\frac{x^3}{5}+ \cdots = \frac{-\log(1-x)-x}{x^2} - \frac 12.
\end{equation*}
Arguing similarly for $\g_r$, $\tau_r$ and letting
\begin{equation}
  V(x):= (1+2S(x))^{1/2}= \left( \frac{-2}{x^2}\left( \log(1-x)+x\right)\right)^{1/2} =
  1+\frac{x}{3}+\frac{7 x^2}{36}+\frac{73 x^3}{540}+ \cdots, \label{vx}
\end{equation}
shows
\begin{align}
   \hat\rho_r  & = (2r)!! \left[ x^{2r+1}\right] V(x)^{-2r-2}, \label{vv1}\\
   \g_r & = (2r-1)!!  \left[ x^{2r}\right] V(x)^{-2r-1}, \label{vv2}\\
   \tau_r & = (2r-1)!!  \left[ x^{2r+1}\right] V(x)^{-2r-1}. \label{vv3}
\end{align}
To go further, we express \e{vv1} -- \e{vv3} using the compositional inverse of $x V(x)$, which  may be written as $x V^*(x)$.
For all $n \in \Z$, by \e{xla} with $r=1$ and \e{xla2},
\begin{align*}
  \left[ x^{n}\right] V(x)^{-n-1} & = (n+1) \left[ x^{n}\right] V^*(x), \\
  \left[ x^{n}\right] V(x)^{-n} & =  \delta_{n,0}+\left[ x^{n-1}\right] V^*(x)'/V^*(x).
\end{align*}
 Hence
\begin{equation} \label{vs}
  V^*(x) = 1-\frac{x}{3}+\frac{x^2}{36}+\frac{x^3}{270}+\frac{x^4}{4320}- \cdots ,
\end{equation}
and
\begin{align}
   \hat\rho_r  & = (2r+2)!! \left[ x^{2r+1}\right] V^*(x), \label{vb1}\\
   \g_r & = (2r+1)!!  \left[ x^{2r}\right] V^*(x), \label{vb2}\\
   \tau_r & = (2r-1)!!  \left[ x^{2r}\right]V^*(x)'/V^*(x). \label{vb3}
\end{align}

\section{Watson's $U$ and $u$ functions} \label{Uu}
If $x V(x)=y$ then it follows from \e{vx} that $\exp(-y^2/2)=(1-x)\exp(x)$. To get our final simplified form, let
\begin{equation*}
  t=y^2/2, \quad u=1-x, \quad \text{so that} \quad e^{-t}=u e^{1-u}.
\end{equation*}
This means that $u=1\pm \sqrt{2t} V^*(\mp \sqrt{2t})$ and we may write these two cases as
\begin{align*}
  U(t) & := 1 + \sqrt{2t} V^*\left(-\sqrt{2t}\right) = 1+\sqrt{2} t^{1/2} +\frac 23 t + \frac{\sqrt 2}{18} t^{3/2} -\frac 2{135} t^2+ \cdots, \\
  u(t) & := 1 - \sqrt{2t} V^*\left(+\sqrt{2t}\right) = 1-\sqrt{2} t^{1/2} +\frac 23 t - \frac{\sqrt 2}{18} t^{3/2} -\frac 2{135} t^2+ \cdots.
\end{align*}
Label the coefficients as $c_j$ so that
\begin{equation*}
  U(t)=\sum_{j=0}^\infty c_j t^{j/2}, \qquad u(t)=\sum_{j=0}^\infty (-1)^j c_j t^{j/2}.
\end{equation*}
We will only need $U(t)$ and often just write $U$. Then
\begin{equation*}
  e^{-t}=U e^{1-U} \implies U+U'=UU',
\end{equation*}
and consequently the coefficients of $U$ satisfy the recursive definition
\begin{equation} \label{crec}
 c_m=0 \text{ when } m < 0, \quad c_0=1,  \quad c_1 >0, \quad \sum_{x+y=n} x c_x c_y =   n c_n + 2 c_{n-2},
\end{equation}
with summation over nonnegative integers $x$ and $y$.
In this paper, (outside of the Appendix), $U$ may be considered as a formal series. The properties of $U(t)$ and $u(t)$ as functions of $t$  are developed by Watson in \cite{Wa29}, where he proves bounds for $\theta_n$ in \e{rb} that were claimed by Ramanujan. Also the connection to branches of the Lambert $W$ function is seen in \cite[Eq. (3.2)]{Vo08}:
for $t\gqs 0$,
\begin{equation*}
  U(t)=-W_{-1}(-e^{-1-t}), \qquad u(t)=-W_0(-e^{-1-t}).
\end{equation*}

Next we express everything in terms of the coefficients $c_j$.

\begin{prop} \label{lus}
We have
\begin{gather} \label{rg}
  \hat\rho_r =-  \frac{(2r+2)!! c_{2r+2}}{2^{(2r+2)/2}}, \qquad \g_r =  \frac{(2r+1)!! c_{2r+1}}{2^{(2r+1)/2}},\\
  \tau_r = \g_r- \frac{(2r-1)!!}{2^{(2r-1)/2}} \frac 14 \sum_{x+y=2r+3} x c_x \cdot y c_y. \label{rgg}
\end{gather}
\end{prop}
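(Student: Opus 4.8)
The key observation is that $U(t) = 1 + \sqrt{2t}\,V^*(-\sqrt{2t})$, so writing $s = -\sqrt{2t}$ (equivalently $t = s^2/2$) gives $U = 1 + (-s)V^*(s) \cdot (-1)$... let me be careful. Actually $\sqrt{2t} = -s$ when $s = -\sqrt{2t}$, so $U = 1 + (-s)\cdot$... no: $\sqrt{2t}V^*(-\sqrt{2t})$ with $\sqrt{2t}$ the positive root and argument $-\sqrt{2t}$. Setting $s=\sqrt{2t}>0$: $U(t) = 1 + sV^*(-s)$. Then $t^{j/2} = (s^2/2)^{j/2} = s^j/2^{j/2}$, so from $U(t) = \sum_j c_j t^{j/2}$ we get $U = \sum_j c_j s^j / 2^{j/2}$. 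Comparing with $U = 1 + sV^*(-s) = 1 + \sum_{k\ge 0} [x^k]V^*(x) \cdot (-1)^k s^{k+1}$, we read off, for $j \ge 1$,
$$ \frac{c_j}{2^{j/2}} = (-1)^{j-1}[x^{j-1}]V^*(x), \quad\text{i.e.}\quad [x^{j-1}]V^*(x) = \frac{(-1)^{j-1} c_j}{2^{j/2}}. $$

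So the first step is simply to substitute this into the already-derived formulas \e{vb1} and \e{vb2}. From \e{vb2}, $\g_r = (2r+1)!!\,[x^{2r}]V^*(x) = (2r+1)!!\,(-1)^{2r}c_{2r+1}/2^{(2r+1)/2} = (2r+1)!!\,c_{2r+1}/2^{(2r+1)/2}$, giving the middle formula in \e{rg}. From \e{vb1}, $\hat\rho_r = (2r+2)!!\,[x^{2r+1}]V^*(x) = (2r+2)!!\,(-1)^{2r+1}c_{2r+2}/2^{(2r+2)/2} = -(2r+2)!!\,c_{2r+2}/2^{(2r+2)/2}$, which is the left formula in \e{rg}. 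These two are essentially bookkeeping.

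The slightly more involved part is \e{rgg}. Starting from \e{vb3}, $\tau_r = (2r-1)!!\,[x^{2r}](V^*(x)'/V^*(x))$, I want to relate $V^*{}'/V^*$ to a quantity expressible through the $c_j$. The natural route is to differentiate the defining relation $e^{-t} = Ue^{1-U}$, or rather to use the functional equation $U + U' = UU'$ (with $' = d/dt$) and the $c_j$-recursion \e{crec}. Writing $V^*{}'/V^* = (\log V^*)'$ and translating the $x$-derivative at $x = -\sqrt{2t}$ into a $t$-derivative of $\log(sV^*(-s)) = \log(U(t)-1)$ via the chain rule will bring in a factor involving $ds/dt = 1/s$ and produce, after extracting $[x^{2r}]$, a convolution $\sum_{x+y = 2r+3} xc_x\cdot yc_y$. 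More directly: from $U+U' = UU'$ we get $U' = U/(U-1)$, so $(U-1)U' = U$, and differentiating $\log(U-1)$ gives $U'/(U-1) = U/(U-1)^2$. The product $U(t)^2 = (\sum c_j t^{j/2})^2$ has the coefficient convolution $\sum_{x+y} c_x c_y$, and the derivative $2t\,\frac{d}{dt}$ acting on $t^{j/2}$ multiplies by $j$, which is what manufactures the $xc_x \cdot yc_y$ weighting. Carrying this through carefully and matching the power of $2$ and the shift in index (the $2r+3$ rather than $2r+1$) against \e{vb3}, then subtracting off $\g_r$ using the middle formula of \e{rg}, should yield \e{rgg}.

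The main obstacle I anticipate is the third formula: correctly tracking the index shifts and powers of $2$ when converting between the $x$-variable series $V^*(x)$, $V^*{}'/V^*$ in \e{vb3} and the $t^{j/2}$-expansion of $U$, particularly because differentiation in $x$ versus in $t$ differ by the factor $dx/dt$, and because the logarithmic derivative introduces a pole at $x=0$ that must be handled (note $V^*(0)=1\neq 0$, so actually $V^*{}'/V^*$ is a genuine power series — the subtlety is instead that $\log(U-1)$ has a $\log t$ singularity, so one works with $U-1 = \sqrt{2t}\,V^*(-\sqrt{2t})$ and differentiates the analytic factor $V^*$). Getting the constant $1/4$ and the exact summation range $x+y = 2r+3$ right is where care is needed; everything else is substitution into formulas \e{vb1}--\e{vb3} already established via Lagrange inversion.
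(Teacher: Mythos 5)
Your handling of the two formulas in \e{rg} is correct and is exactly the paper's route: convert \e{vb1} and \e{vb2} into statements about the coefficients $c_j$ of $U$ via $[x^{j-1}]V^*(x)=(-1)^{j-1}c_j/2^{j/2}$, and both formulas drop out. The same change of variables turns \e{vb3} into $\tau_r = 2^{-(2r-1)/2}(2r-1)!!\,[t^{r-1/2}]\,U'/(1-U)$ (your chain-rule computation with $\log(U-1)=\log\sqrt{2t}+\log V^*(-\sqrt{2t})$ does deliver this), so up to that point you are on the paper's track.

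For \e{rgg} itself, however, your plan stops short of the one identity that actually does the work, and the mechanism you sketch for producing the convolution is not right. You propose that ``the product $U^2$ has the convolution $\sum c_xc_y$ and the operator $2t\,d/dt$ multiplies $t^{j/2}$ by $j$''; but $2t\,\tfrac{d}{dt}(U^2)$ manufactures $\sum (x+y)c_xc_y$, not $\sum xc_x\cdot yc_y$. The weight $xc_x\cdot yc_y$ is the coefficient convolution of $(U')^2$ (each factor differentiated separately), so what is needed is an algebraic reduction of $U'/(1-U)$ to a polynomial in $U'$. The paper's one-line step is: from $U'=U/(U-1)=1+1/(U-1)$ one gets $1/(U-1)=U'-1$, hence
\begin{equation*}
  \frac{U'}{1-U}=-U'\cdot\frac{1}{U-1}=-U'(U'-1)=U'-(U')^2 .
\end{equation*}
Then $[t^{r-1/2}]U'=\tfrac{2r+1}{2}c_{2r+1}$ reproduces $\g_r$ after multiplying by $2^{-(2r-1)/2}(2r-1)!!$, while $[t^{r-1/2}](U')^2=\tfrac14\sum_{x+y=2r+3}xc_x\cdot yc_y$ (the exponent condition $\tfrac{x}{2}-1+\tfrac{y}{2}-1=r-\tfrac12$ forces $x+y=2r+3$), which is \e{rgg} exactly. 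Your intermediate identity $U'/(U-1)=U/(U-1)^2$ is true but on its own leads to $(U')^2/U$, whose coefficients are not a clean convolution; the missing relation $1/(U-1)=U'-1$ is what closes the argument.
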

\begin{proof}
A short calculation lets us replace $V^*$ in \e{vb1}--\e{vb3} with $U$ as follows:
\begin{align}
   \hat\rho_r  & =  2^{-(2r+2)/2} (2r+2)!! \left[ t^{r+1}\right] U(t), \label{vc1}\\
   \g_r & =  2^{-(2r+1)/2} (2r+1)!!  \left[ t^{r+1/2}\right] U(t), \label{vc2}\\
   \tau_r & =  2^{-(2r-1)/2} (2r-1)!!  \left[ t^{r-1/2}\right]U(t)'/(1-U(t)). \label{vc3}
\end{align}
Then \e{vc1} and \e{vc2} imply  \e{rg}. Since
\begin{equation*}
  U' = \frac{U}{U-1} = 1+ \frac{1}{U-1} \implies \frac{1}{U-1} = U'-1,
\end{equation*}
we can also replace $U'/(1-U)$ in \e{vc3} by $U'-(U')^2$. This yields \e{rgg}.
\end{proof}

\begin{prop} \label{top}
Theorem \ref{ky} is equivalent to
\begin{equation}\label{ccj}
  2(n-4)!! \sum_{\substack{x+y=n \\ x \text{ odd}}} x c_x \cdot y c_y =
  -n!! c_n+4(n-2)!! c_{n-2}+
  \sum_{\substack{x+y=n \\ x \text{ odd}}} (-1)^{y/2} x!! c_x \cdot y!! c_y,
\end{equation}
for all odd $n\gqs 3$.
\end{prop}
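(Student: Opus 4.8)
The plan is to substitute the closed forms of Proposition \ref{lus} directly into the equivalent condition \e{alx} of Proposition \ref{roe}, and then simplify. Throughout, write $n=2r+3$, which runs over all odd integers $\gqs 3$ exactly as $r$ runs over the integers $\gqs 0$, so that the family \e{alx} and the family \e{ccj} are indexed in the same way. Using \e{rg} we have $\g_r = 2^{-(n-2)/2}(n-2)!!\,c_{n-2}$, hence $2\g_r = 2^{-(n-4)/2}(n-2)!!\,c_{n-2}$, while \e{rgg} gives $\tau_r-\g_r = -2^{-(n-4)/2}(n-4)!!\,\tfrac14\sum_{x+y=n} x c_x\cdot y c_y$. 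Adding,
\[
  \g_r+\tau_r = 2^{-(n-4)/2}\Bigl((n-2)!!\,c_{n-2} - \tfrac14(n-4)!!\sum_{x+y=n} x c_x\cdot y c_y\Bigr).
\]

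Next I would expand the convolution $\sum_{j=0}^r(-1)^j\hat\rho_j\g_{r-j}$ using \e{rg}. The term indexed by $j$ involves $c_{2j+2}$ and $c_{2r-2j+1}$ and carries the factor $2^{-(2j+2)/2}\cdot 2^{-(2r-2j+1)/2}=2^{-n/2}$, independent of $j$; together with the minus sign from $\hat\rho_j$ this gives
\[
  \sum_{j=0}^r(-1)^j\hat\rho_j\g_{r-j} = -2^{-n/2}\sum_{j=0}^r(-1)^j(2j+2)!!\,c_{2j+2}\cdot(2r-2j+1)!!\,c_{2r-2j+1}.
\]
Putting $y=2j+2$ (even, running over $2,4,\dots,n-1$) and $x=n-y$ (odd), and using $(-1)^j=(-1)^{y/2-1}=-(-1)^{y/2}$, the right-hand side becomes $2^{-n/2}\sum_{\substack{x+y=n,\ x\text{ odd},\ y\gqs 2}}(-1)^{y/2}x!!\,c_x\cdot y!!\,c_y$. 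Relative to the sum $\sum_{\substack{x+y=n,\ x\text{ odd}}}$ appearing in \e{ccj}, only the boundary term $y=0$, $x=n$ is missing, and it equals $(-1)^0\,n!!\,c_n\cdot 0!!\,c_0 = n!!\,c_n$; this accounts precisely for the isolated term $-n!!\,c_n$ in \e{ccj}.

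Now I would assemble \e{alx}: insert the two displays above, multiply through by $2^{(n-4)/2}$ — noting $2^{(n-4)/2}/2^{n/2}=\tfrac14$ — and then by $4$, to obtain
\[
  4(n-2)!!\,c_{n-2} - (n-4)!!\sum_{x+y=n} x c_x\cdot y c_y + \sum_{\substack{x+y=n\\ x\text{ odd}}}(-1)^{y/2}x!!\,c_x\cdot y!!\,c_y - n!!\,c_n = 0.
\]
Since $n$ is odd, exactly one of $x,y$ is odd in each term of the first sum and the summand $x c_x\cdot y c_y$ is symmetric under $x\leftrightarrow y$, so $\sum_{x+y=n} x c_x\cdot y c_y = 2\sum_{\substack{x+y=n,\ x\text{ odd}}}x c_x\cdot y c_y$. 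Substituting this and transposing terms yields exactly \e{ccj}; every step is reversible, so \e{ccj} for all odd $n\gqs 3$ is equivalent to \e{alx} for all $r\gqs 0$, which by Proposition \ref{roe} is equivalent to Theorem \ref{ky}.

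The argument is purely computational, so the only real danger is in the bookkeeping. The step to watch is the re-indexing of the convolution: keeping track of the parities of $x$ and $y$, matching the sign $(-1)^j$ with $(-1)^{y/2}$, and — the one genuinely delicate point — noticing that the convolution $\sum_j(-1)^j\hat\rho_j\g_{r-j}$ omits the $y=0$ term that is present in the sum on the right of \e{ccj}, which is what forces the extra $-n!!\,c_n$. The collapse of all powers of $2$ to the single constant $\tfrac14$ is automatic once $n=2r+3$ is used, and provides a convenient internal consistency check.
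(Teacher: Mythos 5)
Your computation is correct and fills in exactly the substitution that the paper's one-line proof ("substitute the formulas of Proposition \ref{lus} into \eqref{alx}") leaves to the reader: the re-indexing $y=2j+2$, the sign match $(-1)^j=-(-1)^{y/2}$, the missing $y=0$ boundary term producing $-n!!\,c_n$, and the symmetrization $\sum_{x+y=n}x c_x\,y c_y=2\sum_{x\text{ odd}}x c_x\,y c_y$ are all handled correctly, and every step is reversible. This is the same approach as the paper, just written out in full.
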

\begin{proof}
This follows by substituting the formulas of Proposition \ref{lus} into \e{alx}.
\end{proof}


At this point we have reduced things to a clear and seemingly elementary question: how can \e{crec} be used to prove \e{ccj}?
Note that  \e{ccj} may also be written more symmetrically as
\begin{equation*}
  \sum_{x+y=n} c_x c_y \left[(n-4)!! x y -\frac{\cos(\frac{\pi x}2)+\cos(\frac{\pi y}2)}2 x!! y!! \right] = -n!! c_n + 4(n-2)!! c_{n-2}.
\end{equation*}

\section{Stirling cycle numbers}

The Stirling cycle numbers $\stira{n}{k}$ may be defined recursively for integers $n$ and $k$ with $n\gqs 0$ by the initial condition $\stira{0}{k} = \delta_{k,0}$ and the relation
\begin{equation}\label{stcy}
  \stira{n+1}{k}=\stira{n}{k-1}+n \stira{n}{k}. 
\end{equation}
See \cite[Sect. 6.1]{Knu} for more information about them. We also require a variant $\stira{n}{k}^*$ that  satisfies \e{stcy} but has the different initial conditions
$$
\stira{0}{k}^*=0 \quad \text{for} \quad k\lqs 0,
\qquad
  \stira{n}{n}^* = \frac{n-1}n \quad \text{for} \quad n\gqs 1.
  $$
These numbers for $n\lqs 5$ are displayed in Tables \ref{figa} and \ref{figb} with the initial conditions highlighted and zeros omitted.
\SpecialCoor
\psset{griddots=5,subgriddiv=0,gridlabels=0pt}
\psset{xunit=0.5cm, yunit=0.5cm}
\psset{linewidth=1pt}
\psset{dotsize=2pt 0,dotstyle=*,arrowsize=2pt 3}
\begin{table}[ht]
\centering
\begin{pspicture}(-7,-0.5)(6,7.5) 

\newrgbcolor{pale}{0.98 0.83 0.79}
\newrgbcolor{pale2}{0.99 0.7 0.62}

\pspolygon[linecolor=pale,fillstyle=solid,fillcolor=pale]
(-5.5,4.5)(6.5,4.5)(6.5,5.5)(-5.5,5.5)

\rput(0,5){$1$}
\rput(1,4){$1$}
\rput(0,3){$1$}
\rput(2,3){$1$}
\rput(-1,2){$2$}
\rput(1,2){$3$}
\rput(3,2){$1$}
\rput(-2,1){$6$}
\rput(0,1){$11$}
\rput(2,1){$6$}
\rput(4,1){$1$}
\rput(-3,0){$24$}
\rput(-1,0){$50$}
\rput(1,0){$35$}
\rput(3,0){$10$}
\rput(5,0){$1$}

\rput(-7,0){$n=5$}
\rput(-7,1){$n=4$}
\rput(-7,2){$n=3$}
\rput(-7,3){$n=2$}
\rput(-7,4){$n=1$}
\rput(-7,5){$n=0$}

\rput{45}(2,7){$k=0$}
\rput{45}(3,6){$k=1$}
\rput{45}(4,5){$k=2$}
\rput{45}(5,4){$k=3$}
\rput{45}(6,3){$k=4$}
\rput{45}(7,2){$k=5$}

\end{pspicture}
\caption{The Stirling cycle numbers $\stira{n}{k}$}
\label{figa}
\end{table}
\SpecialCoor
\psset{griddots=5,subgriddiv=0,gridlabels=0pt}
\psset{xunit=0.5cm, yunit=0.5cm}
\psset{linewidth=1pt}
\psset{dotsize=2pt 0,dotstyle=*,arrowsize=2pt 3}
\begin{table}[ht]
\centering
\begin{pspicture}(-7,-0.5)(6,7.5) 
\newrgbcolor{pale}{0.98 0.83 0.79}
\newrgbcolor{pale2}{0.99 0.7 0.62}

\pspolygon[linecolor=pale,fillstyle=solid,fillcolor=pale]
(-5.5,4.5)(-0.2,4.5)(4.8,-0.5)(6.2,-0.5)(0.2,5.5)(-5.5,5.5)

\rput(2,3){$\frac 12$}
\rput(1,2){$1$}
\rput(3,2){$\frac 23$}
\rput(0,1){$3$}
\rput(2,1){$3$}
\rput(4,1){$\frac 34$}
\rput(-1,0){$12$}
\rput(1,0){$15$}
\rput(3,0){$6$}
\rput(5,0){$\frac 45$}

\rput(-7,0){$n=5$}
\rput(-7,1){$n=4$}
\rput(-7,2){$n=3$}
\rput(-7,3){$n=2$}
\rput(-7,4){$n=1$}
\rput(-7,5){$n=0$}

\rput{45}(2,7){$k=0$}
\rput{45}(3,6){$k=1$}
\rput{45}(4,5){$k=2$}
\rput{45}(5,4){$k=3$}
\rput{45}(6,3){$k=4$}
\rput{45}(7,2){$k=5$}

\end{pspicture}
\caption{The modified Stirling cycle numbers $\stira{n}{k}^*$}
\label{figb}
\end{table}
As with the binomial coefficients $\binom{n}{k}$, we have that $\stira{n}{k}$ is zero for $n \gqs 0$ if $k$ is outside the range $0\lqs k\lqs n$. When manipulating sums containing these terms it is usually simpler to have the index range over all integers $k$, though the sum is really finite. For $n \gqs 1$ we have $\stira{n}{k}^*=0$ for  $k\lqs 1$. As we see in Section \ref{more}, $\stira{n}{k}^*$ may be nonzero for $k>n$.

An initially mysterious aspect of \e{ccj} is the appearance of the double factorials. We will find that double factorials arise in the description of the sums
\begin{equation*}
  S_n(k):= \left[ t^{n/2}\right] U(t)^k = \sum_{j_1+j_2+ \cdots +j_k = n} c_{j_1}c_{j_2} \cdots c_{j_k}.
\end{equation*}

\begin{lemma}
For all integers $n$, $k\gqs 1$,
\begin{equation}\label{all}
  \frac{S_n(k+1)}{k+1}  = \frac{S_n(k)}{k} + \frac 2n S_{n-2}(k).
\end{equation}
\end{lemma}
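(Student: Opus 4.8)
The plan is to derive \e{all} from the differential equation satisfied by $U$ rather than from the combinatorial recursion \e{crec} directly. Recall from Section~\ref{proo}... actually from the previous section that $U+U'=UU'$, which we rewrite as $(U-1)U'=U$. Since $k\gqs 1$, we may multiply through by $U^{k-1}$ to get
\begin{equation*}
  U^kU'-U^{k-1}U'=U^k .
\end{equation*}
Using $U^kU'=\frac{1}{k+1}\left(U^{k+1}\right)'$ and $U^{k-1}U'=\frac1k\left(U^k\right)'$ (the latter valid because $k\gqs 1$), this becomes
\begin{equation*}
  \frac{1}{k+1}\left(U^{k+1}\right)'-\frac1k\left(U^k\right)'=U^k .
\end{equation*}

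Next I would extract the coefficient of $t^{(n-2)/2}$ from both sides. Writing $U^m=\sum_{j\gqs 0}S_j(m)t^{j/2}$, differentiation gives $\left(U^m\right)'=\sum_{j\gqs 0}\tfrac j2 S_j(m)t^{j/2-1}$, so $\left[t^{(n-2)/2}\right]\left(U^m\right)'=\tfrac n2 S_n(m)$, while $\left[t^{(n-2)/2}\right]U^k=S_{n-2}(k)$. Hence
\begin{equation*}
  \frac n2\left(\frac{S_n(k+1)}{k+1}-\frac{S_n(k)}{k}\right)=S_{n-2}(k),
\end{equation*}
which is \e{all} upon dividing by $n/2$. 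For $n=0$ the displayed identity reads $0=S_{-2}(k)=0$, so it is the statement \e{all} in cleared form that is really being proved; for $n\neq 0$ it is \e{all} verbatim.

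There is essentially no obstacle here once one commits to working with the functional equation: the only choices to get right are the multiplier $U^{k-1}$ and the exponent $(n-2)/2$ in the coefficient extraction, both of which are forced by wanting $S_n(k+1)$, $S_n(k)$ and $S_{n-2}(k)$ to appear. As a consistency check I would verify the result against the first few values, for instance $S_1(k)=k c_1$ (giving $0=0$) and $S_2(k)=k^2-\tfrac{k}{3}$ (giving $1=S_0(k)=1$), to make sure the bookkeeping with the half-integer powers of $t$ is correct.
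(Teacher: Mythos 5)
Your proof is correct and is essentially the paper's argument translated to the level of the series $U$: the symmetrization identity \e{the} in the paper's proof is exactly the coefficient form of your step $U^{k-1}U' = \tfrac1k (U^k)'$, and the paper's application of the recursion \e{crec} is the coefficient form of the functional equation $UU'=U+U'$ that you invoke directly. The generating-function phrasing is slightly cleaner, but no new idea is involved.
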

\begin{proof}
Note that
\begin{align*}
  k \sum_{j_1+ \cdots +j_k = n} (j_1 c_{j_1}) c_{j_2} \cdots c_{j_k}
  & = \sum_{j_1+ \cdots +j_k = n} \Bigl\{ (j_1 c_{j_1}) c_{j_2} \cdots c_{j_k}+ \cdots +   c_{j_1} c_{j_2} \cdots (j_k c_{j_k})\Bigr\}\\
  & = n \sum_{j_1+ \cdots +j_k = n} c_{j_1}c_{j_2} \cdots c_{j_k},
\end{align*}
and therefore
\begin{equation}\label{the}
   \sum_{j_1+ \cdots +j_k = n} (j_1 c_{j_1}) c_{j_2} \cdots c_{j_k} = n\frac{S_n(k)}{k}.
\end{equation}
By \e{crec},
\begin{multline*}
  \sum_{j_1+ \cdots +j_{k+1} = n} (j_1 c_{j_1}c_{j_2}) c_{j_3}\cdots c_{j_{k+1}}
  = \sum_{r+j_3+ \cdots +j_{k+1} = n} (r c_r) c_{j_3}\cdots c_{j_{k+1}}\\
  + 2\sum_{r+j_3+ \cdots +j_{k+1} = n} ( c_{r-2}) c_{j_3}\cdots c_{j_{k+1}},
\end{multline*}
and hence, using \e{the},
\begin{equation*}
  n\frac{S_n(k+1)}{k+1}  = n\frac{S_n(k)}{k} + 2 S_{n-2}(k). \qedhere
\end{equation*}
\end{proof}

To simplify the following expressions, let $d_n:= n!! c_n$.

\begin{theorem} \label{sec}
 For all integers $n\gqs 0$ and $k\gqs 1$,
 \begin{equation} \label{!!}
  n!! \frac{S_n(k)}{k} = \sum_{j=0}^{k-1} 2^j \left(\stira{k}{k-j}  d_{n-2j}- \stira{k}{k-j}^* \delta_{n-2j,0}\right).
\end{equation}
\end{theorem}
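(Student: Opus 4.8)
The plan is to prove \e{!!} by induction on $k$, using the recursion \e{all} for the sums $S_n(k)$ together with the defining recursion \e{stcy} for the Stirling cycle numbers (and its modified variant for $\stira{n}{k}^*$).

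\textbf{Base case.} For $k=1$ the left side is $n!! S_n(1) = n!! c_n = d_n$, while the right side has only the $j=0$ term, namely $2^0\bigl(\stira{1}{1} d_n - \stira{1}{1}^* \delta_{n,0}\bigr) = d_n - 0\cdot \delta_{n,0} = d_n$, since $\stira{1}{1}=1$ and $\stira{1}{1}^* = 0$. So the identity holds for $k=1$. (One should also double-check $k=1$ against the stated initial data in Figure \ref{figb}: $\stira{1}{1}^*=\frac{1-1}{1}=0$, consistent.)

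\textbf{Inductive step.} Assume \e{!!} holds for some $k\gqs 1$ and all $n\gqs 0$. Multiply \e{all} by $n!!$ to get $n!!\,\tfrac{S_n(k+1)}{k+1} = n!!\,\tfrac{S_n(k)}{k} + \tfrac{2}{n} n!!\, S_{n-2}(k)$. The first term on the right is handled directly by the inductive hypothesis. For the second term, write $\tfrac{2}{n} n!! = 2(n-2)!!$ (valid for $n\gqs 1$; the $n=0$ case must be looked at separately since $S_{-2}(k)=0$ then and the term vanishes), so it equals $2\cdot (n-2)!!\,S_{n-2}(k) = 2\cdot k \cdot \bigl((n-2)!!\,\tfrac{S_{n-2}(k)}{k}\bigr)$, and apply the inductive hypothesis with $n$ replaced by $n-2$. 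Wait — the factor of $k$ is a nuisance; better to note $2(n-2)!! S_{n-2}(k) = 2k\cdot\bigl[(n-2)!! S_{n-2}(k)/k\bigr]$ and then substitute \e{!!}. Adding the two expansions, one collects, for each $j$, the coefficient of $d_{n-2j}$ (and of $\delta_{n-2j,0}$), and must check it equals $2^j\stira{k+1}{k+1-j}$ (resp. $2^j\stira{k+1}{k+1-j}^*$). A shift of summation index in the second sum (the term coming from $S_{n-2}(k)$ contributes to $d_{n-2j}$ when its internal index is $j-1$, picking up an extra factor $2$ and an extra factor $k$) should produce exactly the combination $\stira{k}{k+1-j} + k\stira{k}{k-j}$, wait let me recompute the indexing: from $n!! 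S_n(k)/k$ we get $2^j \stira{k}{k-j}d_{n-2j}$, and from $2k[(n-2)!!S_{n-2}(k)/k] = 2(n-2)!!S_{n-2}(k)$ we get, using the hypothesis at $n-2$, $\sum_i 2^{i+1}\stira{k}{k-i}d_{n-2-2i} = \sum_j 2^{j}\cdot 2\stira{k}{k-j+1}d_{n-2j}$; hmm the powers of $2$ don't match. The careful bookkeeping — reconciling the $2^j$ prefactor with the shift and with the recursion $\stira{k+1}{m}=\stira{k}{m-1}+k\stira{k}{m}$ — is the main obstacle, and getting the roles of the two summands right requires rewriting \e{all} as $(k+1)$ times something rather than dividing; I would actually carry the induction on the quantity $n!! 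S_n(k)$ directly (not divided by $k$) or keep track of the factor $1/k$ throughout and verify the recursion lands correctly on $\stira{k+1}{k+1-j}/(k+1)$ after clearing denominators.

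\textbf{The modified Stirling numbers.} The terms with $\delta_{n-2j,0}$ are only active when $n=2j$, i.e. $n$ even and $j=n/2$; they encode a correction because $S_0(k) = c_0^k = 1$ for all $k$ (so $S_0(k)/k = 1/k$, not matching the ``naive'' pattern). One checks that $n!! S_n(k)/k$ at $n=2j$ forces the coefficient of $\delta_{n-2j,0}=\delta_{0,0}$ to satisfy the \emph{same} recursion \e{stcy} as the $d$-coefficients but with the special initial value at $k=j$: when $n=2(k-1)$ wait, when the sum index $j$ reaches $k-1$ we need $2^{k-1}(\stira{k}{1}d_{n-2(k-1)} - \stira{k}{1}^*\delta_{n-2(k-1),0})$, and since $S_0(k)/k = 1/k$ one reads off that $\stira{k}{1}^*$ must equal $\stira{k}{1} - (k-1)!\,\cdot(\text{something})$... more precisely, evaluating \e{!!} at $n=0$ gives $0 = \sum_{j}2^j(\stira{k}{k-j}d_{-2j} - \stira{k}{k-j}^*\delta_{-2j,0})$, which is just $\stira{k}{k} - \stira{k}{k}^* = 1 - \frac{k-1}{k} = \frac1k$, consistent with $S_0(k)/k = 1/k$ and $0!!=1$. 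So the $\stira{n}{k}^*$ were \emph{defined} precisely so that this works, and the inductive step for the starred numbers uses that both $\stira{n}{k}$ and $\stira{n}{k}^*$ obey \e{stcy} while their difference at the diagonal encodes $1/k - 1/(k+1)$ appropriately. I expect the cleanest writeup to separate the two claims — the $d_{n-2j}$ coefficients and the $\delta_{n-2j,0}$ coefficients — and run the induction on each, with the recursion \e{all} feeding both; the delicate point throughout is the interaction of the powers of $2$ with the index shift $j \mapsto j-1$ and the $1/k$ versus $1/(k+1)$ normalization.
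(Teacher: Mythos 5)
Your strategy --- verify $n=0$ directly, then induct on $k$ for $n\gqs 1$ using \e{all} together with the recursion \e{stcy} for both $\stira{k}{m}$ and $\stira{k}{m}^*$ --- is exactly the paper's proof, which states it in one line without the details. The bookkeeping you flag as ``the main obstacle'' does close up, and your worry that ``the powers of $2$ don't match'' comes from a miscount. Multiplying \e{all} by $n!!$ gives
\begin{equation*}
  n!!\,\frac{S_n(k+1)}{k+1}=n!!\,\frac{S_n(k)}{k}+2k\cdot\frac{(n-2)!!\,S_{n-2}(k)}{k} \qquad (n\gqs 1),
\end{equation*}
and the inductive hypothesis at $n-2$ turns the second term into $2k\sum_{i=0}^{k-1}2^i\bigl(\stira{k}{k-i}d_{n-2-2i}-\stira{k}{k-i}^*\delta_{n-2-2i,0}\bigr)$. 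Reindexing with $j=i+1$ converts $2k\cdot 2^{i}$ into $k\cdot 2^{j}$: the leading $2$ is absorbed into raising the exponent by one and is not an extra factor. Hence the total coefficient of $2^j d_{n-2j}$ is $\stira{k}{k-j}+k\,\stira{k}{k+1-j}=\stira{k+1}{k+1-j}$ by \e{stcy}, exactly as required, and likewise for the starred coefficients; you should not carry an extra $k$ into the Stirling symbol as in your displayed attempt. One point worth making explicit: when you expand $\stira{k+1}{k+1}^*=\stira{k}{k}^*+k\,\stira{k}{k+1}^*$ at $j=0$, the term $\stira{k}{k+1}^*$ is \emph{not} zero (the starred numbers live above the diagonal too, as section \ref{more} notes), but it is multiplied by $\delta_{n,0}$ and so contributes nothing once $n\gqs 1$ --- this, in addition to the $2/n$ in \e{all}, is why $n=0$ must be checked separately, which both you and the paper do (note the left side at $n=0$ is $1/k$, not $0$ as you first wrote, matching $\stira{k}{k}-\stira{k}{k}^*=1/k$). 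With that verification written out your proof is complete; there is no gap.
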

\begin{proof}
We first verify \e{!!} when $n=0$. It reduces to
\begin{equation*}
  \frac{S_0(k)}{k} = \stira{k}{k} - \stira{k}{k}^*
\end{equation*}
and this is true as $S_0(k)=1$, $\stira{k}{k}=1$ and $\stira{k}{k}^*=(k-1)/k$.
To prove \e{!!} in general, use induction on $k$. The case $k=1$ is easily checked and the induction step when $n\gqs 1$ follows from \e{stcy} and \e{all}.
\end{proof}

Theorem \ref{sec} will allow us to give an expansion of $U'$, starting with \e{upx}, that provides useful information in the next proposition. A similar expansion of $(U')^2$ in \e{up} will give formulas for the left side of \e{ccj}.

\begin{prop}
For all integers $n \gqs 0$,
\begin{equation} \label{radu}
   \frac n2 \frac{d_n}{2^{n/2}}
  = \sum_{j=0}^{n/2} \frac{d_{n-2j}}{ 2^{n/2-j}} \alpha(j) -  \alpha^*(n/2),
\end{equation}
where $\alpha(j)$ and $\alpha^*(j)$ are defined in \e{aj} and \e{x2m}, and $d_n:= n!! c_n$.
\end{prop}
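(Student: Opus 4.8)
The plan is to obtain \eqref{radu} by differentiating the defining relation $U+U'=UU'$ to get a usable expression for $U'$, then extracting the coefficient of $t^{n/2}$ using the closed form in Theorem \ref{sec}. From $U+U'=UU'$ we have $U'=U/(U-1)$, hence $(U-1)U'=U$, and differentiating (or simply rearranging) gives $U' = 1 + 1/(U-1)$. A cleaner route: multiply $U+U'=UU'$ by suitable factors and take $d/dt$; but the most direct is to start from $U' - 1 = 1/(U-1)$, so $(U-1)(U'-1)=1$. Differentiating this last identity yields $U'(U'-1) + (U-1)U'' = 0$, which expresses $U''$ in terms of $U'$, and is not quite what we want. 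Instead I expect the intended manipulation is: write $U' = \sum_k a_k (U-1)^{?}$ type expansion is awkward, so the right thing is to use the geometric-type expansion together with the fact that, by \eqref{vc2} and $U(t) = 1 + \sum_{m\geq 1} c_m t^{m/2}$, powers of $U-1$ have coefficients governed by $S_n(k)$.

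Concretely, here is the skeleton. Since $U - 1$ has no constant term, $1/(U-1)$ is a Laurent series in $t^{1/2}$ beginning with $c_1^{-1} t^{-1/2}$; better to work with $t\,U'$ or with the series $U'-1 = 1/(U-1)$ multiplied through. I would instead differentiate $e^{-t} = U e^{1-U}$ directly: $-e^{-t} = U' e^{1-U} - U U' e^{1-U} = U'(1-U)e^{1-U}$, giving $U' = e^{-t}/((U-1)e^{1-U}) = U/(U-1)$ again, consistent. The productive identity is rather to note $\frac{d}{dt}\log(U-1) = \frac{U'}{U-1} = \frac{U'^2}{U} $ hmm. Let me follow the hint in the paper: the proposition says "Theorem \ref{sec} will allow us to give an expansion of $U'$, starting with \eqref{upx}". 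So \eqref{upx} (just below the excerpt) must be the explicit series $U'(t) = \sum_{n} (\text{something involving } d_n) t^{(n-?)/2}$, obtained by applying Theorem \ref{sec} with the relation $U' - 1 = 1/(U-1) = -\sum_{k\geq 1}(1-U)^{-k}$... no: $\frac{1}{U-1} = \frac{1}{c_1 t^{1/2}}\cdot\frac{1}{1 + (\text{higher})}$, expandable as $\sum_{k\geq 1} (-1)^{k-1} c_1^{-k} t^{-k/2}(\cdots)$, whose coefficients are exactly the $S_n(k)$ with alternating signs. Then $n!! S_n(k)/k$ is given by Theorem \ref{sec}, and summing over $k$ with the $\stira{k}{k-j}$ and $\stira{k}{k-j}^*$ reorganized by the value of $j$ produces the coefficients $\alpha(j)$ and $\alpha^*(j)$ in \eqref{aj}, \eqref{x2m}. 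Matching the coefficient of $t^{n/2}$ on both sides of a rearranged form of $U+U'=UU'$ — specifically the $\frac n2 d_n/2^{n/2}$ on the left coming from $[t^{n/2}]\, t U'(t)$ type term, or from differentiating $U$ — then yields \eqref{radu}.

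So the key steps, in order: (1) rearrange $U+U'=UU'$ into $U'-1 = 1/(U-1)$ and expand the right side as $\sum_{k\geq 1}(-1)^{k-1}(U-1)^{-k}$ — or more safely, clear denominators to avoid Laurent series, writing $(U-1)(U'-1)=1$ and extracting coefficients; (2) use $[t^{n/2}](U-1)^k$ = (alternating sum of) $S_n(k)$ and invoke Theorem \ref{sec} to rewrite $n!!\,S_n(k)/k$ via Stirling cycle numbers; (3) collect the contributions to the coefficient of $t^{n/2}$ by the index $j$, defining $\alpha(j)$ as the resulting finite combination of $2^j$, powers, and $\stira{k}{k-j}$ summed over the relevant $k$, and $\alpha^*(j)$ as the corresponding combination with $\stira{k}{k-j}^*$ and the $\delta_{n-2j,0}$ collapsing the sum; (4) identify the left-hand side $\frac n2 \frac{d_n}{2^{n/2}}$ as the coefficient arising from the $U'$ (or differentiated-$U$) term via $d_n = n!!\,c_n$ and $[t^{n/2}]\,\tfrac{d}{dt}U = \tfrac{n}{2}c_n t^{n/2-1}$, then match.

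The main obstacle I anticipate is bookkeeping the double factorials and powers of $2$ correctly through the rearrangement of the double sum over $k$ and $j$: Theorem \ref{sec} produces $n!!\,S_n(k)/k = \sum_{j=0}^{k-1} 2^j(\stira{k}{k-j} d_{n-2j} - \stira{k}{k-j}^*\delta_{n-2j,0})$, and when this is summed against the alternating weights from the $1/(U-1)$ expansion, one must interchange the order of summation so that $j$ (equivalently $n-2j$, the argument of $d$) becomes the outer index; the coefficient $\alpha(j)$ is then $\sum_{k>j}(-1)^{k-1}2^{j-?}\stira{k}{k-j}/(\text{something})$ and one has to check this converges/terminates and matches the definition \eqref{aj} exactly — in particular getting the normalization $2^{n/2-j}$ versus $2^j$ to line up. The $c_1 = \sqrt 2$ normalization (so $(U-1)^{-k}$ carries $2^{-k/2}$) is what converts the $2^j$ of Theorem \ref{sec} into the $2^{n/2-j}$ of \eqref{radu}, and verifying that this is consistent for all parities of $n$ (recalling $c_n = 0$ forces nothing, but $S_n(k)$ vanishes unless $n \equiv$ appropriate parity) is the delicate point. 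Once the index juggling is pinned down, the identity is forced term-by-term and no deeper input beyond \eqref{crec} and Theorem \ref{sec} is needed.
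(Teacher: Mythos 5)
Your overall skeleton (expand $U'$ somehow, apply Theorem \ref{sec} to convert $S_n(k)$ into Stirling-cycle data, interchange the sums so that $j$ is outermost, and read off $\alpha(j)$, $\alpha^*(j)$) matches the shape of the paper's argument, but there is a genuine gap at the very first step: you never identify the identity that actually gets expanded, and every candidate you float either fails or degenerates. The paper does not work from the differential relation $U+U'=UU'$ alone. It starts from the \emph{integrated} functional equation $e^{-t}=Ue^{1-U}$, i.e.\ $U-t-1=\log U=\log\bigl(1-(1-U)\bigr)=\sum_{\ell\gqs 1}\tfrac{(-1)^{\ell-1}}{\ell}(U-1)^{\ell}$, which after cancelling the $\ell=1$ term gives $t=\sum_{\ell\gqs 2}\tfrac{(-1)^{\ell}}{\ell}(U-1)^{\ell}$. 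Multiplying by $U'=U/(U-1)$ then yields
\begin{equation*}
  U't=\sum_{\ell\gqs 2}\frac{1}{\ell}\sum_{k=1}^{\ell}(-1)^{k}\binom{\ell-1}{k-1}U^{k},
\end{equation*}
which is \e{upx}: a series in \emph{positive} powers of $U$, so that $[t^{n/2}]$ produces $\tfrac n2 c_n$ on the left and the genuine $S_n(k)=[t^{n/2}]U^k$ on the right, to which Theorem \ref{sec} applies. The factors $1/\ell$ coming from the logarithm are exactly the $1/\ell$ weights in the definitions \e{aj} and \e{x2m} of $\alpha(j)$ and $\alpha^*(j)$; without the logarithmic expansion there is no way to arrive at those particular sums.

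By contrast, your proposed starting points all stay inside the first-order relation. Extracting coefficients from $(U-1)(U'-1)=1$ just reproduces the recursion \e{crec} (it \emph{is} $U+U'=UU'$) and gives no new information. The series identity you write for the other route, $1/(U-1)=\sum_{k\gqs 1}(-1)^{k-1}(U-1)^{-k}$, is false — that sum equals $1/U$ — and in any case a Laurent expansion in negative powers of $U-1$ is not something Theorem \ref{sec} (which concerns $[t^{n/2}]U^k$ for $k\gqs 1$) can process. A secondary omission: after interchanging the sums one must justify truncating the $\ell$-sum at $2j+2$ so that $\alpha(j)$ and $\alpha^*(j)$ are well defined independently of $n$; this is supplied by Proposition \ref{pro} and is not addressed in your outline. (Your worry about the $2^{n/2-j}$ versus $2^j$ normalization is a non-issue: \e{radu} is just the identity $\tfrac n2 d_n=\sum_j 2^j d_{n-2j}\alpha(j)-2^{n/2}\alpha^*(n/2)$ divided through by $2^{n/2}$, with the $\delta_{n,2j}$ in Theorem \ref{sec} collapsing the starred sum to the single term $j=n/2$.)
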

\begin{proof}
The relation $U=\exp(U-t-1)$ implies
\begin{equation*}
  U-t-1 = \log(U)=\log(1-(1-U))=\sum_{\ell\gqs 1} \frac{(-1)^{\ell-1}}{\ell}(U-1)^\ell.
\end{equation*}
Multiply through by $U'=U/(U-1)$ to find
\begin{equation} \label{upx}
  U't =  \sum_{\ell\gqs 2} \frac{(-1)^{\ell}}{\ell}U(U-1)^{\ell-1}
  =  \sum_{\ell\gqs 2} \frac{1}{\ell}\sum_{k=1}^{\ell}(-1)^{k}\binom{\ell-1}{k-1} U^{k}.
\end{equation}
Then apply $[t^{n/2}]$. As $(U-1)^{\ell-1} = (\sqrt{2} t^{1/2}+\cdots)^{\ell-1}$, it is apparent that we obtain nonzero contributions only when $\ell\lqs n+1$. For our following arguments it is convenient to take an upper limit of $n+2$:
\begin{equation} \label{up2x}
  \frac n2 c_n
  =  \sum_{\ell= 2}^{n+2} \frac{1}{\ell}\sum_{k=1}^{\ell}(-1)^{k}\binom{\ell-1}{k-1} S_n(k).
\end{equation}
Hence, by Theorem \ref{sec},
\begin{equation} \label{ukr}
  \frac n2 d_n
  =  \sum_{\ell= 2}^{n+2} \frac{1}{\ell}\sum_{k=1}^{\ell}(-1)^{k}\binom{\ell-1}{k-1}
  k\left\{\sum_{j=0}^{k-1} 2^j \stira{k}{k-j}  d_{n-2j}- \sum_{j=0}^{k-1} 2^j \stira{k}{k-j}^* \delta_{n,2j} \right\}.
\end{equation}
Interchanging summations finds that \e{ukr} equals
\begin{multline} \label{ukrq}
    \sum_{j=0}^{n+1} 2^j d_{n-2j} \sum_{\ell= 2}^{n+2} \frac{1}{\ell}\sum_{k=1}^\ell (-1)^{k} k \stira{k}{k-j}\binom{\ell-1}{k-1}\\
  -\sum_{j=0}^{n+1} 2^j \delta_{n,2j} \sum_{\ell= 2}^{n+2} \frac{1}{\ell}\sum_{k=1}^\ell (-1)^{k} k \stira{k}{k-j}^*\binom{\ell-1}{k-1}.
\end{multline}
We must have $j\lqs n/2$ in both sums in \e{ukrq} to have nonzero contributions.
Label the components  as
\begin{align}
  \alpha(j) & :=  \sum_{\ell= 2}^{2j+2} \frac{1}{\ell}\sum_{k=1}^\ell (-1)^{k} k \stira{k}{k-j}\binom{\ell-1}{k-1}, \label{aj}\\
  \alpha^*(j) & :=  \sum_{\ell= 2}^{2j+2} \frac{1}{\ell}\sum_{k=1}^\ell (-1)^{k} k \stira{k}{k-j}^* \binom{\ell-1}{k-1}, \label{x2m}
\end{align}
with $\alpha^*(n/2) := 0$ for $n$ odd. We will see shortly in Proposition \ref{pro} that the inner sums  in \e{aj} and \e{x2m} can only be nonzero for $\ell \lqs 2j+2$,  and this is why the upper limit of summation is $2j+2$ instead of $n+2$. Then
\e{radu} follows.
\end{proof}

With \e{rg} and $n=2m+1$ we obtain
\begin{equation} \label{gmj}
  m \g_m = \sum_{j=0}^{m-1} \alpha(j+1) \g_{m-1-j} \qquad (m\gqs 0),
\end{equation}
from \e{radu} since $\alpha(0)=1/2$.
For $n=2m+2$ we obtain
\begin{equation} \label{rhj}
  (m+1/2) \hat\rho_m = \alpha^*(m+1)-\alpha(m+1)+\sum_{j=0}^{m-1} \alpha(j+1) \hat\rho_{m-1-j} \qquad (m\gqs 0).
\end{equation}

\section{Simplifying $\alpha(j)$ and $\alpha^*(j)$}
For integers $n$ and $k$, we will need a variant $\eud{n}{k}^*$ of the usual second-order Eulerian numbers $\eud{n}{k}$. They  satisfy \e{eur}, but with the initial condition $\eud{1}{k}^*=\delta_{k,-1}$.
These second-order Eulerian numbers are shown in Tables \ref{figc} and \ref{figd} with the initial conditions highlighted and zeros omitted, as before.
\SpecialCoor
\psset{griddots=5,subgriddiv=0,gridlabels=0pt}
\psset{xunit=0.5cm, yunit=0.5cm}
\psset{linewidth=1pt}
\psset{dotsize=2pt 0,dotstyle=*,arrowsize=2pt 3}
\begin{table}[ht]
\centering
\begin{pspicture}(-8,-0.5)(6,7.5) 
\newrgbcolor{pale}{0.98 0.83 0.79}
\newrgbcolor{pale2}{0.99 0.7 0.62}

\pspolygon[linecolor=pale,fillstyle=solid,fillcolor=pale]
(-6.5,4.5)(5,4.5)(5,5.5)(-6.5,5.5)

\rput(0,5){$1$}
\rput(-1,4){$1$}
\rput(0,3){$2$}
\rput(-2,3){$1$}
\rput(-1,2){$8$}
\rput(1,2){$6$}
\rput(-3,2){$1$}
\rput(-2,1){$22$}
\rput(0,1){$58$}
\rput(2,1){$24$}
\rput(-4,1){$1$}
\rput(-3,0){$52$}
\rput(-1,0){$328$}
\rput(1,0){$444$}
\rput(3,0){$120$}
\rput(-5,0){$1$}

\rput(-8,0){$n=5$}
\rput(-8,1){$n=4$}
\rput(-8,2){$n=3$}
\rput(-8,3){$n=2$}
\rput(-8,4){$n=1$}
\rput(-8,5){$n=0$}

\rput{45}(2,7){$k=0$}
\rput{45}(3,6){$k=1$}
\rput{45}(4,5){$k=2$}
\rput{45}(5,4){$k=3$}
\rput{45}(6,3){$k=4$}

\end{pspicture}
\caption{The second-order Eulerian numbers $\eud{n}{k}$}
\label{figc}
\end{table}
\SpecialCoor
\psset{griddots=5,subgriddiv=0,gridlabels=0pt}
\psset{xunit=0.5cm, yunit=0.5cm}
\psset{linewidth=1pt}
\psset{dotsize=2pt 0,dotstyle=*,arrowsize=2pt 3}
\begin{table}[ht]
\centering
\begin{pspicture}(-8,-0.5)(6,6) 
\newrgbcolor{pale}{0.98 0.83 0.79}
\newrgbcolor{pale2}{0.99 0.7 0.62}

\pspolygon[linecolor=pale,fillstyle=solid,fillcolor=pale]
(-6.5,3.5)(5,3.5)(5,4.5)(-6.5,4.5)

\rput(-3,4){$1$}
\rput(-2,3){$3$}
\rput(-1,2){$12$}
\rput(-3,2){$3$}
\rput(-2,1){$42$}
\rput(0,1){$60$}
\rput(-4,1){$3$}
\rput(-3,0){$108$}
\rput(-1,0){$474$}
\rput(1,0){$360$}
\rput(-5,0){$3$}

\rput(-8,0){$n=5$}
\rput(-8,1){$n=4$}
\rput(-8,2){$n=3$}
\rput(-8,3){$n=2$}
\rput(-8,4){$n=1$}
\rput(-8,5){$n=0$}

\rput{45}(0.5,5.5){$k=0$}
\rput{45}(1.5,4.5){$k=1$}
\rput{45}(2.5,3.5){$k=2$}
\rput{45}(3.5,2.5){$k=3$}

\end{pspicture}
\caption{The modified second-order Eulerian numbers $\eud{n}{k}^*$}
\label{figd}
\end{table}
For $n\gqs 0$ we have $\eud{n}{k}=0$ outside $0\lqs k \lqs n$ and $\eud{n}{k}^*=0$ outside $-1\lqs k \lqs n-2$.
The key relations they satisfy are
\begin{alignat}{2}
  \stira{m}{m-n} & = \sum_k \eu{n}{k} \binom{m+k}{2n} \quad &\qquad &(m,n \in \Z_{\gqs 0}), \label{ir} \\
  \stira{m}{m-n}^* & = \sum_k \eu{n}{k}^* \binom{m+k}{2n} \quad &\qquad &(m,n \in \Z_{\gqs 1}). \label{ir2}
\end{alignat}
The identity \e{ir} is \cite[Eq. (6.44)]{Knu}, and is  proved by induction on $n$.  A similar proof gives \e{ir2}, where the induction starts with the easily verified identity
\begin{equation*}
  \stira{m}{m-1}^* = \binom{m-1}{2} \qquad (m \in \Z_{\gqs 1}).
\end{equation*}

\begin{prop} \label{pro}
We have $\alpha(0)=\alpha^*(0)=1/2$ and for integers $j  \gqs 1$
\begin{align}
  \alpha(j) & = \frac{-1}{2(j+1)} \sum_{r= 0}^{j} (-1)^r \eu{j+1}{r}\binom{2j+1}{r+1}^{-1}, \label{po}\\
  \alpha^*(j) & = \frac{-1}{2(j+1)} \sum_{r= 0}^{j} (-1)^r \eu{j+1}{r}^*\binom{2j+1}{r+1}^{-1}. \label{po2}
\end{align}
\end{prop}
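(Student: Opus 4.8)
The plan is to reduce both formulas to the single combinatorial identity \eqref{ir}--\eqref{ir2} by rewriting the double sum defining $\alpha(j)$ (and $\alpha^*(j)$) so that the Stirling cycle numbers $\stira{k}{k-j}$ get replaced, via \eqref{ir}, by a sum of binomials $\binom{k+r}{2j}$, after which the sum over $k$ and $\ell$ should collapse to something involving $\binom{2j+1}{r+1}^{-1}$. The case $j=0$ is immediate: in \eqref{aj} and \eqref{x2m} only $\stira{1}{1}=1$ and $\stira{1}{1}^*=0$ contribute, and a direct evaluation of $\sum_{\ell=2}^{2}\frac1\ell\sum_{k=1}^\ell(-1)^k k\stira{k}{k}\binom{\ell-1}{k-1}$ gives $1/2$; similarly for $\alpha^*(0)$ one checks the initial data gives $1/2$. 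So from now on assume $j\gqs 1$.

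For $j\gqs 1$, I would first substitute \eqref{ir} into \eqref{aj}:
\begin{equation*}
  \alpha(j) = \sum_{\ell=2}^{2j+2}\frac1\ell \sum_{k=1}^\ell (-1)^k k \binom{\ell-1}{k-1}\sum_r \eu{j}{r}\binom{k+r}{2j},
\end{equation*}
wait --- note the index shift: \eqref{ir} with $m=k$, $n=j$ reads $\stira{k}{k-j}=\sum_r\eu{j}{r}\binom{k+r}{2j}$, but the claimed answer features $\eu{j+1}{r}$, so I expect the natural presentation to use $\stira{k}{k-j}$ with the $(j+1)$-th row of Eulerian numbers appearing only after the manipulation; one should be careful here and it may be cleaner to keep $\eu{j}{r}$ and verify numerically at the end. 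Interchanging the order of summation, the core task becomes evaluating the inner double sum
\begin{equation*}
  T(r):=\sum_{\ell=2}^{2j+2}\frac1\ell\sum_{k=1}^\ell (-1)^k k\binom{\ell-1}{k-1}\binom{k+r}{2j}.
\end{equation*}
The $k$-sum $\sum_k(-1)^k k\binom{\ell-1}{k-1}\binom{k+r}{2j}$ is a finite difference of a polynomial in $k$ of degree $1+2j$ against the $(\ell-1)$-th difference operator (up to sign), so it vanishes for $\ell-1>1+2j$, i.e. $\ell>2j+2$ --- which is exactly why the upper limit $2j+2$ in \eqref{aj} is legitimate (the remark after \eqref{x2m} promises this), and it is essentially zero except near $\ell=2j+2$. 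I would evaluate the surviving terms using the standard identity $\sum_k(-1)^k\binom{N}{k}p(k)=(-1)^N N!\,[x^N]p(x+\,\cdot\,)$-type evaluation, or more concretely by writing $k\binom{k+r}{2j}$ in the falling-factorial / binomial basis centered appropriately and using $\sum_k(-1)^k\binom{N}{k}\binom{k+a}{N}=(-1)^N$. Carrying this through should produce the factor $\frac{1}{\ell}$ summed against a single nonzero residue, yielding the reciprocal binomial $\binom{2j+1}{r+1}^{-1}$ and the prefactor $\frac{-1}{2(j+1)}$.

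The argument for $\alpha^*(j)$ is identical line for line, using \eqref{ir2} in place of \eqref{ir}; the only input that changes is the initial data of the Eulerian array, and since the binomial-sum manipulation never touches the initial conditions, the same computation delivers \eqref{po2} with $\eu{j+1}{r}^*$ replacing $\eu{j+1}{r}$. The main obstacle I anticipate is purely bookkeeping: getting the index shift between $\eu{j}{\cdot}$ and $\eu{j+1}{\cdot}$ right and correctly identifying which boundary terms in the $\ell$-sum survive after the finite-difference annihilation --- in particular confirming that the $\frac1\ell$ weights combine to give precisely $\frac{-1}{2(j+1)}\binom{2j+1}{r+1}^{-1}$ rather than some neighboring binomial. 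A safe route is to first prove a clean closed form for $T(r)$ as a function of $r$ and $j$ (valid for all $r$ in range, with explicit vanishing outside), then assemble $\alpha(j)=\sum_r\eu{j+1}{r}T(r)$ and check the normalization against the known values $\alpha(1),\alpha(2)$ coming from \eqref{gmj} and the data $\g_1=1/12$, $\g_2=1/288$.
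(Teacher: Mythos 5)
There is a genuine gap, and you have in fact flagged it yourself without resolving it: the mismatch between the row $\eud{j}{\cdot}$ that your direct substitution of \e{ir} produces and the row $\eud{j+1}{\cdot}$ that appears in \e{po}. This is not mere bookkeeping. The paper's proof hinges on absorbing the factor $k$ into the Stirling number \emph{before} invoking \e{ir}: by the recurrence \e{stcy}, $k \stira{k}{k-j} = \stira{k+1}{k-j} - \stira{k}{k-j-1}$, and after recombining with the binomial coefficients the inner sum becomes $\sum_{k} (-1)^{k}  \stira{k+1}{k-j}\binom{\ell}{k}$. Since $\stira{k+1}{k-j}=\stira{k+1}{(k+1)-(j+1)}$, it is \e{ir} with $n=j+1$ that now applies, which is exactly how the $(j+1)$-st Eulerian row enters. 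If you instead keep the factor $k$ outside and expand $\stira{k}{k-j}$ with $n=j$, you land on $\sum_r \eud{j}{r}\,T(r)$ and must then prove a separate identity converting that to the stated form; by the recurrence \e{eur} this amounts to showing $T(r)$ equals a specific combination of \emph{two} neighbouring reciprocal binomials, which is additional nontrivial work you have not carried out.

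A second inaccuracy: the $\ell$-sum does not reduce to ``a single nonzero residue.'' After the $k$-sum is evaluated via the Vandermonde-type identity one gets $(-1)^\ell\binom{r+1}{2j+2-\ell}$, which is nonzero for every $\ell$ in the range $j+1\lqs \ell\lqs 2j+2$ (when $r\lqs j$). The reciprocal binomial $\binom{2j+1}{r+1}^{-1}$ then comes from summing all of these against $1/\ell$ using the partial-fraction identity
\begin{equation*}
  \sum_{m} \frac{(-1)^m}{x+m} \binom{n}{m}= \frac{n!}{x(x+1) \cdots (x+n)},
\end{equation*}
not from a boundary term surviving a finite-difference annihilation. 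Your vanishing argument for $\ell>2j+2$ is correct and matches the paper, and your reduction of \e{po2} to the same computation with \e{ir2} in place of \e{ir} is sound once the main argument is fixed; but as written the proposal is a plan with its central step left open rather than a proof.
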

\begin{proof}
For the inner sum in \e{aj} use
\begin{equation*}
  k \stira{k}{k-j} =  \stira{k+1}{k-j} -  \stira{k}{k-j-1}
\end{equation*}
 and recombine to show, for integers $\ell$ and $j$ with $\ell\gqs 1$,
\begin{equation} \label{job}
  \sum_{k} (-1)^{k} k \stira{k}{k-j}\binom{\ell-1}{k-1} =  \sum_{k} (-1)^{k}  \stira{k+1}{k-j}\binom{\ell}{k}.
\end{equation}
Next use \e{ir} and change the order of summation to see that, when $j\gqs -1$, \e{job} equals
\begin{equation*}
  \sum_r \eu{j+1}{r}
  \sum_{k} (-1)^{k} \binom{k+r+1}{2j+2}\binom{\ell}{k}
  =
  \sum_r \eu{j+1}{r} (-1)^\ell \binom{r+1}{2j+2-\ell},
\end{equation*}
where we used the identity \cite[Eq. (5.24)]{Knu}. It is now clear that the inner sum in \e{aj} is zero for $\ell > 2j+2$, as  claimed earlier. Therefore
\begin{equation} \label{op}
  \alpha(j) =
  \sum_r \eu{j+1}{r}
   \sum_{\ell\gqs 2} \frac{(-1)^{\ell}}{\ell}  \binom{r+1}{2j+2-\ell} \qquad (j\gqs 0).
\end{equation}
Note that in \e{op} we need $r\lqs j$ to have $\eud{j+1}{r} \neq 0$ and
 $2j+2-\ell \lqs r+1\lqs j+1$ to have $\binom{r+1}{2j+2-\ell}\neq 0$. Therefore we may assume that $\ell \gqs j+1$. This agrees with the condition  $\ell\gqs 2$ in \e{op} provided that $j\gqs 1$. Hence
 \begin{equation} \label{opu}
  \alpha(j) =
  \sum_{r=0}^j \eu{j+1}{r}
   \sum_{\ell=j+1}^{2j+2} \frac{(-1)^{\ell}}{\ell}  \binom{r+1}{\ell+r-1-2j} \qquad (j\gqs 1).
\end{equation}
One last required identity, from \cite[Eqns. (5.40), (5.41)]{Knu}, is
\begin{equation*}
  \sum_{m} \frac{(-1)^m}{x+m} \binom{n}{m}= \frac{n!}{x(x+1) \cdots (x+n)}.
\end{equation*}
 Applying this to the inner sum in \e{opu} yields \e{po}.

The same steps  prove \e{po2}, using \e{ir2} instead of \e{ir}.
\end{proof}

We next express $\alpha(j)$ and $\alpha^*(j)$ as integrals. To do this, define
\begin{equation*}
  E_n(x):= \sum_k \eu{n}{k} x^k, \qquad F_n(x):=\frac{E_n(x)}{(x-1)^{2n}}.
\end{equation*}
Then it may be seen that the equalities $F_0(x)=1$ and
\begin{equation} \label{rfn}
   \left( \frac x{1-x} F_n(x)\right)'=F_{n+1}(x)
\end{equation}
conveniently encode the recursive definition of the second-order Eulerian numbers $\eud{n}{k}$. For example,
\begin{equation*}
  F_1(x)=\frac 1{(1-x)^2}, \qquad F_2(x)=\frac{1+2x}{(1-x)^4}, \qquad F_3(x)=\frac{1+8x+6x^2}{(1-x)^6}.
\end{equation*}

\begin{prop} \label{val}
For integers $n$, $a$, $b$ with $n\gqs 1$, $a \gqs 0$ and $n+b+1\gqs 0$,
\begin{equation} \label{khe}
  \int_{-\infty}^0 t^a (1-t)^{-a-b-2} F_n(t) \, dt = \frac{(-1)^a}{2n+a+b+1}
  \sum_{k= 0}^{n-1} (-1)^k \eu{n}{k}\binom{2n+a+b}{k+a}^{-1}.
\end{equation}
\end{prop}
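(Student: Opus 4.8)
The plan is to expand $F_n$ and integrate the resulting Beta-type kernels term by term. Write $F_n(t)=E_n(t)/(1-t)^{2n}=\sum_{k}\eu{n}{k}\,t^k(1-t)^{-2n}$, a finite sum since $\eu{n}{k}=0$ unless $0\lqs k\lqs n-1$ (equivalently $\deg E_n=n-1$). Under the hypotheses $a\gqs 0$ and $n+b+1\gqs 0$ the integrand $t^a(1-t)^{-a-b-2}F_n(t)$ is absolutely integrable on $(-\infty,0)$: near $t=0$ it is $O(t^a)$, and as $t\to-\infty$ it is a constant times $t^a(1-t)^{-a-b-2}\cdot t^{n-1}/t^{2n}$, of order $t^{-n-b-3}$, which is integrable there exactly because $n+b+2\gqs 1$. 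So one may interchange the (finite) sum with the integral and is reduced to evaluating, for each $k$ with $0\lqs k\lqs n-1$,
\[
  I_k:=\int_{-\infty}^0 t^{a+k}(1-t)^{-2n-a-b-2}\,dt.
\]

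Next, substitute $t=-s$ to get $I_k=(-1)^{a+k}\int_0^\infty s^{a+k}(1+s)^{-(2n+a+b+2)}\,ds$, which is the classical Beta integral $I_k=(-1)^{a+k}B(a+k+1,\,2n+b-k+1)$. Its two parameters are positive integers precisely under the stated hypotheses: the first because $a,k\gqs 0$, and the second because $k\lqs n-1$ forces $2n+b-k+1\gqs n+b+2\gqs 1$. Hence
\[
  I_k=(-1)^{a+k}\,\frac{(a+k)!\,(2n+b-k)!}{(2n+a+b+1)!},
\]
with every factorial that appears being the factorial of a nonnegative integer.

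Finally, rewrite this ratio in binomial form. Since $2n+b-k=2n+a+b-(k+a)$ and $(2n+a+b+1)!=(2n+a+b+1)(2n+a+b)!$, we have
\[
  \frac{(a+k)!\,(2n+b-k)!}{(2n+a+b+1)!}=\frac{1}{2n+a+b+1}\binom{2n+a+b}{k+a}^{-1},
\]
where $0\lqs k+a\lqs 2n+a+b$ guarantees that this is the genuine (positive) binomial coefficient. Splitting $(-1)^{a+k}=(-1)^a(-1)^k$ and pulling the factor $(-1)^a/(2n+a+b+1)$ out of the sum $\sum_{k=0}^{n-1}\eu{n}{k}I_k$ then yields \eqref{khe}.

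The argument presents no genuine obstacle beyond this bookkeeping; the one point requiring care is the convergence of $I_k$ at $-\infty$ for every admissible $k$, i.e.\ the positivity of the second Beta parameter $2n+b-k+1$, which is exactly where the hypothesis $n+b+1\gqs 0$ is used together with the fact that $E_n$ has no $t^n$ term (so $k\lqs n-1$). One could alternatively prove \eqref{khe} by induction on $n$, using \eqref{rfn} and integration by parts to pass from $n$ to $n+1$, but this requires tracking boundary contributions and is less transparent than the direct term-by-term evaluation.
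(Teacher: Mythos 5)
Your proof is correct and is essentially the paper's argument: both expand $E_n$ term by term and reduce to Beta integrals, the only difference being that the paper first writes the claim as $(-1)^a\int_0^1 x^a(1-x)^b F_n(x/(x-1))\,dx$ and evaluates the Beta integrals on $[0,1]$, while you substitute $t=-s$ and use the Beta integral in its $\int_0^\infty s^{p-1}(1+s)^{-p-q}\,ds$ form. Your explicit convergence check (the exponent $-n-b-3$ at $-\infty$, using $\deg E_n=n-1$ and $n+b+1\gqs 0$) is a worthwhile addition that the paper leaves implicit.
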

\begin{proof}
Starting with
\begin{equation} \label{pra}
  \int_0^1 x^a (1-x)^b F_n\left( \frac x{x-1}\right) \, dx =  \sum_{k= 0}^{n-1} (-1)^k \eu{n}{k}
  \int_0^1 x^{a+k} (1-x)^{b-k+2n} \, dx,
\end{equation}
and evaluating the beta integral shows \e{pra} equals $(-1)^a$ times the right of \e{khe}. The change of variables $t=x/(x-1)$ on the left of \e{pra} completes the proof.
\end{proof}

With this proposition, \e{po} becomes
\begin{equation} \label{zy}
  \alpha(n)= \int_{-\infty}^0 t (1-t)^{-1} F_{n+1}(t) \, dt = \int_{-\infty}^0 t (1-t)^{-1} F_0(t) F_{n+1}(t) \, dt,
\end{equation}
for $n\gqs 1$. As in Majer's work in \cite{mo}, integrating by parts using \e{rfn} shows
\begin{equation} \label{ann}
  \alpha(n)= (-1)^j \int_{-\infty}^0 t (1-t)^{-1} F_j(t) F_{n+1-j}(t) \, dt,
\end{equation}
for $n\gqs 1$ and $0\lqs j \lqs n+1$.

To find similar formulas for $\alpha^*(n)$, define
\begin{equation*}
  E^*_n(x):= \sum_k \eu{n}{k}^* x^k, \qquad F^*_n(x):=\frac{E^*_n(x)}{(x-1)^{2n}}, \qquad (n\gqs 1).
\end{equation*}
 Then $F^*_n(x)$ satisfies \e{rfn} but with the initial condition $F^*_1(x)=1/(x(x-1)^2)$ since $E^*_1(x)=1/x$.

\begin{prop}
For integers $n\gqs 1$,
\begin{align}\label{xt}
 \alpha^*(n) & = \int_{-\infty}^0 t (1-t)^{-1} F^*_{n+1}(t) \, dt \\
  & = (-1)^n \int_{-\infty}^0  (1-t)^{-3} F_{n}(t) \, dt \label{xt2}\\
  & = (-1)^n \frac{1}{2(n+1)} \sum_{k= 0}^{n-1} (-1)^k \eu{n}{k}\binom{2n+1}{k}^{-1}. \label{xt3}
\end{align}
\end{prop}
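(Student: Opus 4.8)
The plan is to establish the three expressions for $\alpha^*(n)$ in order, mirroring the chain that runs from \e{po} through \e{zy} to \e{ann}: the first comes out of \e{po2}, the last is a special case of Proposition \ref{val}, and the middle equality bridges the two by integration by parts.

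For the first equality I would rerun the proof of Proposition \ref{val} with $E^*_m$ in place of $E_m$. Using $F^*_{n+1}(x/(x-1)) = \sum_k (-1)^k \eud{n+1}{k}^* x^k (1-x)^{2(n+1)-k}$ and integrating against $x(1-x)^{-2}$ over $(0,1)$, each term produces the beta integral $\int_0^1 x^{k+1}(1-x)^{2n-k}\,dx = \frac{1}{2(n+1)}\binom{2n+1}{k+1}^{-1}$; the only new feature relative to Proposition \ref{val} is the $k=-1$ term coming from $\eud{n+1}{-1}^*$, and it is harmless since the exponent $k+1$ is $\gqs 0$ for every $k\gqs -1$, so all these integrals converge. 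Summing and invoking \e{po2} at $j=n$ gives $\alpha^*(n) = -\int_0^1 x(1-x)^{-2} F^*_{n+1}(x/(x-1))\,dx$, and the substitution $t = x/(x-1)$ turns this into $\int_{-\infty}^0 t(1-t)^{-1} F^*_{n+1}(t)\,dt$. The apparent mismatch between the summation range $0\lqs r\lqs n$ in \e{po2} and the support $-1\lqs r\lqs n-1$ of $\eud{n+1}{r}^*$ is illusory: the extra terms are $\eud{n+1}{n}^* = 0$ and $\eud{n+1}{-1}^* = \delta_{n,0}$, which both vanish for $n\gqs 1$.

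For the middle equality I would iterate integration by parts exactly as in \cite{mo}, but with the two families of polynomials mixed. Set $I_j := (-1)^j \int_{-\infty}^0 t(1-t)^{-1} F_j(t) F^*_{n+1-j}(t)\,dt$ for $0\lqs j\lqs n$; since $F_0 = 1$, the first equality gives $I_0 = \alpha^*(n)$. For $0\lqs j\lqs n-1$ we have $n+1-j\gqs 2$, so the $F^*$-analog of \e{rfn} gives $F^*_{n+1-j} = \bigl(\frac{t}{1-t} F^*_{n-j}\bigr)'$; integrating by parts and using $\bigl(\frac{t}{1-t}F_j\bigr)' = F_{j+1}$ from \e{rfn} yields $I_j = I_{j+1}$, provided the boundary terms vanish. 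Iterating down to $j=n$ and using $F^*_1(t) = 1/(t(t-1)^2)$ together with $\frac{t}{1-t}\cdot\frac{1}{t(t-1)^2} = (1-t)^{-3}$ gives $\alpha^*(n) = I_n = (-1)^n \int_{-\infty}^0 (1-t)^{-3} F_n(t)\,dt$. The final equality is then simply the $a=0$, $b=1$ instance of Proposition \ref{val} applied to $F_n$.

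The only point demanding genuine care is the vanishing of every boundary term $\bigl[(\frac{t}{1-t})^2 F_j(t) F^*_{n-j}(t)\bigr]_{-\infty}^0$ arising in the iteration. At $t=0$ this product is built from finite factors when $n-j\gqs 2$, and when $n-j=1$ the simple pole of $F^*_1$ at $0$ is more than cancelled by the factor $t^2$; at $t\to-\infty$ the factor $(\frac{t}{1-t})^2\to 1$ while $F^*_{n-j}(t)$ decays like a negative power of $t$, so the product tends to $0$ there as well. One should likewise record that all the integrals involved converge near $0$ and near $-\infty$, but this is routine given the explicit rational shapes of $F_n$ and $F^*_n$.
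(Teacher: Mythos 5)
Your proof is correct and takes essentially the same route as the paper: the starred analogue of Proposition \ref{val} together with \e{po2} for the first equality, the mixed integration-by-parts iteration $I_j=I_{j+1}$ down to $j=n$ for the second, and Proposition \ref{val} with $a=0$, $b=1$ for the third. The extra checks you record (that $\eud{n+1}{-1}^*$ vanishes for $n\gqs 1$ and that the boundary terms vanish at $0$ and $-\infty$) are details the paper leaves implicit.
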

\begin{proof}
The equality \e{xt} follows by the same reasoning as \e{zy}, with Proposition \ref{val} valid when $F_n(t)$ and $\eud{n}{k}$ are replaced by their starred versions provided $n\gqs 2$. Including $F_0(t)$ in \e{xt} and integrating by parts gives
\begin{equation*}
  \alpha^*(n) = (-1)^j \int_{-\infty}^0 t (1-t)^{-1} F_j(t) F^*_{n+1-j}(t) \, dt,
\end{equation*}
for  $0\lqs j \lqs n$. Then \e{xt2} is the case $j=n$ and \e{xt3} follows from \e{xt2} by Proposition \ref{val}.
\end{proof}

\section{Bernoulli numbers} \label{ber}
Let $B_n$ be the $n$th Bernoulli number, using the convention $B_1=1/2$. In this section we exploit the connection to these numbers given by the asymptotic expansion of $\log \G(n)$ we saw in \e{logg} in Section \ref{ini}. The next result is suggested by comparing \e{hy} and \e{gmj}.

\begin{prop} \label{apk}
For $j\gqs 0$,
\begin{equation} \label{abba}
  \alpha(j) = B_{j+1}/(j+1). 
\end{equation}
\end{prop}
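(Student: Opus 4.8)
The plan is to compare the recursion \e{gmj} satisfied by the numbers $\alpha(j)$ with the recursion \e{hy} satisfied by the numbers $B_{j+2}/(j+2)$: the two have exactly the same shape, namely $m\g_m=\sum_{j=0}^{m-1}(\cdots)\g_{m-1-j}$, so \e{abba} should follow by a uniqueness argument. First I would handle the base case $j=0$ separately: Proposition \ref{pro} gives $\alpha(0)=1/2$, and with the convention $B_1=1/2$ this is precisely $B_1/1$, so \e{abba} holds at $j=0$.

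For $j\gqs 1$ I would argue as follows. Taking the $r=m$ instance of \e{hy} and subtracting \e{gmj} gives
\[
  \sum_{j=0}^{m-1}\bigl(\alpha(j+1)-B_{j+2}/(j+2)\bigr)\g_{m-1-j}=0 \qquad (m\gqs 1).
\]
Since $\g_0=1$, an induction on $m$ forces $\alpha(j+1)=B_{j+2}/(j+2)$ for every $j\gqs 0$: in the $m$-th identity the inductive hypothesis kills every term with $j<m-1$, leaving only the term $j=m-1$, which reads $\alpha(m)-B_{m+1}/(m+1)=0$. This is \e{abba} for all $j\gqs 1$. Equivalently, in terms of formal power series: \e{gmj} says $z g'(z)=z\,g(z)\sum_{i\gqs 0}\alpha(i+1)z^i$, and since $g$ is a unit in $\C[[z]]$ (as $g(0)=\g_0=1$) this gives $g'(z)/g(z)=\sum_{i\gqs 0}\alpha(i+1)z^i$; by \e{hx}, together with the vanishing of $B_m$ for odd $m\gqs 3$, we also have $g'(z)/g(z)=\sum_{i\gqs 0}\tfrac{B_{i+2}}{i+2}z^i$, and equating coefficients again yields \e{abba}.

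I do not expect a genuine obstacle here: both \e{gmj} (coming from \e{radu}) and \e{hy}/\e{hx} (coming from \e{logg}) are already established, and the argument is essentially the remark that they encode the same convolution identity. The only points needing a little care are the separate treatment of $j=0$ with the convention $B_1=1/2$, and the observation that \e{hx}/\e{hy} already builds in the vanishing of the odd-index Bernoulli numbers ($B_m=0$ for odd $m\gqs 3$), which is exactly what makes the right-hand sides of the two recursions match coefficient by coefficient.
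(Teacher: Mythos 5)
Your proof is correct and takes essentially the same approach as the paper: the paper likewise obtains \e{abba} by matching the recursion \e{gmj} for $\alpha$ against the Bernoulli recursion \e{hy} for $\g_r$, the only difference being that it makes the cancellation explicit by applying the inversion \e{iff} to both recursions (producing \e{pop} and \e{iff2}) rather than by your direct induction on the difference --- two mechanisms that are equivalent, both resting on $\g_0=1$. The base case $\alpha(0)=1/2=B_1$ is handled identically.
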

\begin{proof}
Use \e{iff} with \e{gmj} to show
\begin{equation} \label{pop}
  \sum_{j=0}^{r} (-1)^{j} (j+1) \g_{j+1}\g_{r-j} =  \alpha(r+1),
\end{equation}
for $r\gqs 0$. Then \e{iff2} and \e{pop} imply \e{abba} for $j\gqs 1$. Also  $\alpha(0)=1/2 = B_1$.
\end{proof}

It follows now from  \e{rhj} that, for integers $m\gqs 0$,
\begin{equation}
  (m+1/2) \hat\rho_m  = \alpha^*(m+1)- \frac{B_{m+2}}{m+2}+\sum_{j=0}^{m-1} \frac{B_{j+2}}{j+2} \hat\rho_{m-1-j}.\label{rhj2}
\end{equation}

Equation \e{move} may also be proved next.

\begin{theorem} \label{mov}
For all integers $n\gqs 0$,
\begin{equation} \label{nkb}
  \sum_{k= 0}^n (-1)^k \eu{n}{k}\binom{2n+1}{k+1}^{-1} = 2B_{n+1}.
\end{equation}
\end{theorem}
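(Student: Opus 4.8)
The plan is to extract \e{nkb} by matching the two descriptions of $\alpha(n)$ that the preceding sections already supply: the integral coming from Proposition~\ref{val}, and the closed form $\alpha(n)=B_{n+1}/(n+1)$ of Proposition~\ref{apk}. The key is to feed the sum in \e{nkb} into \e{khe} with the parameters that reproduce its shape, namely $a=1$ and $b=0$, which gives, for $n\gqs 1$,
\begin{equation*}
  \int_{-\infty}^0 t(1-t)^{-3} F_n(t)\, dt = \frac{-1}{2n+2}\sum_{k=0}^{n-1}(-1)^k\eu{n}{k}\binom{2n+1}{k+1}^{-1},
\end{equation*}
and the summation may be taken up to $k=n$ at no cost since $\eud{n}{n}=0$ (which follows from \e{eur} by induction).

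Next I would identify the left-hand integral with $-\alpha(n)$. Since $F_1(t)=(1-t)^{-2}$, the $j=1$ instance of \e{ann} reads
\begin{equation*}
  \alpha(n) = -\int_{-\infty}^0 t(1-t)^{-1} F_1(t) F_n(t)\, dt = -\int_{-\infty}^0 t(1-t)^{-3} F_n(t)\, dt \qquad (n\gqs 1).
\end{equation*}
Comparing the two displays yields $\sum_{k=0}^{n}(-1)^k\eu{n}{k}\binom{2n+1}{k+1}^{-1}=(2n+2)\,\alpha(n)$, and Proposition~\ref{apk} turns the right-hand side into $(2n+2)B_{n+1}/(n+1)=2B_{n+1}$, which is \e{nkb} for $n\gqs 1$. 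The remaining case $n=0$ is immediate: the sum reduces to the single term $\eud{0}{0}\binom{1}{1}^{-1}=1$, and $2B_1=1$ as well.

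There is essentially no hard step: all the analytic content is already packaged in Propositions~\ref{val} and \ref{apk} and in the integration by parts behind \e{ann}. What needs care is purely bookkeeping — verifying the hypotheses $n\gqs 1$, $a\gqs 0$, $n+b+1\gqs 0$ of Proposition~\ref{val} for the choice $a=1$, $b=0$; using \e{ann} within its stated range $0\lqs j\lqs n+1$ at $j=1$; confirming that adjoining the vanishing $k=n$ term makes the two summation ranges agree; and tracking the lone sign flip contributed by \e{ann}. Should one prefer to sidestep \e{ann} altogether, the needed identity $\alpha(n)=-\int_{-\infty}^0 t(1-t)^{-3}F_n(t)\,dt$ also drops out of \e{zy} after one integration by parts via \e{rfn}, the boundary contributions vanishing because of the factor $t$ at $0$ and the decay of $F_n$ as $t\to-\infty$.
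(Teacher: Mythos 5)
Your proof is correct and follows essentially the same route as the paper: the $n=0$ check, the identity $\alpha(n)=-\int_{-\infty}^0 t(1-t)^{-3}F_n(t)\,dt$ from \e{ann} with $j=1$, Proposition~\ref{apk}, and Proposition~\ref{val} with $a=1$, $b=0$. Your explicit remark that the $k=n$ term may be adjoined because $\eud{n}{n}=0$ for $n\gqs 1$ is a small bookkeeping point the paper leaves implicit.
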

\begin{proof}
Equality \e{nkb} is true for $n=0$.  When $n \gqs 1$, by \e{ann} for $j=1$ and \e{abba},
\begin{align}
  \frac{B_{n+1}}{n+1} & = - \int_{-\infty}^0 t (1-t)^{-1} F_1(t) F_{n}(t) \, dt \notag\\
  & = - \int_{-\infty}^0 t (1-t)^{-3} F_{n}(t) \, dt. \label{abc}
\end{align}
Proposition \ref{val} with $a=1$ and $b=0$ finishes the argument.
\end{proof}

Another proof of Theorem \ref{mov} is provided in \cite{Fu}.  For more related  identities see \cite{mo,RU19,Fu}.

\section{Proof of the main theorem}
\begin{prop} \label{ert}
Let $n\gqs 3$ be an odd integer. Then  \e{ccj} is true   if and only if
\begin{equation}\label{ccj2}
  \sum_{j=0}^{(n-3)/2} \beta(j) 2^j d_{n-2-2j} = \frac{n-2}2
  \Bigg(2d_{n-2}- \frac{d_n}2+ \sum_{\substack{x+y=n \\ x \text{ odd}}} \frac{(-1)^{y/2}}2 d_x d_y
  \Bigg),
\end{equation}
for $\beta(j)$ defined in \e{bj} and $d_n$ our shorthand for $n!! c_n$.
\end{prop}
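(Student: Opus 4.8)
The statement is purely a reorganization of \e{ccj}: it claims that \e{ccj} (for a fixed odd $n\gqs 3$) is equivalent to \e{ccj2}. So the plan is not to prove anything new about the $c_j$, but to carry out a bookkeeping manipulation that converts the left-hand side of \e{ccj}, namely $2(n-4)!!\sum_{x+y=n,\,x\text{ odd}} xc_x\cdot yc_y$, into the convolution $\sum_{j} \beta(j)2^j d_{n-2-2j}$ appearing on the left of \e{ccj2}. The right-hand sides already match up to the overall factor $(n-2)/2$ once one rewrites $n!!c_n=d_n$, $(n-2)!!c_{n-2}=d_{n-2}$ and $x!!c_x\cdot y!!c_y=d_xd_y$: indeed $-n!!c_n+4(n-2)!!c_{n-2}+\sum (-1)^{y/2}x!!c_x\,y!!c_y$ is exactly $2d_{n-2}-\tfrac12 d_n+\sum\tfrac{(-1)^{y/2}}2 d_xd_y$ after dividing by $2$, and multiplying by $(n-2)/2$ is the normalization forced by whatever $\beta(j)$ turns out to be. So the content is entirely in the left-hand side.

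The key step is to expand $(U')^2$ in the spirit of the expansion of $U't$ in \e{upx}–\e{radu}, obtaining a formula (the ``\e{up}'' referenced in the paragraph after Theorem \ref{sec}) for $[t^{(n-1)/2}](U')^2$ — equivalently for $\sum_{x+y=n+1}xc_x\cdot yc_y$ — as a convolution $\sum_j(\text{something})\,d_{n+1-2j}$ against the $d$-sequence, with the coefficients being a second Stirling-cycle-type sum analogous to $\alpha(j),\alpha^*(j)$. Concretely: since $U'=U/(U-1)=1+1/(U-1)$ and (from $U-t-1=\log U$) we have $1/(U-1)=U'-1$, one gets $(U')^2 = U' \cdot\bigl(1+1/(U-1)\bigr)$, and more usefully one can write $(U-1)(U')^2 = U U'$, i.e. $(U')^2 = UU'/(U-1) = UU'(U'-1) = U(U')^2 - UU'$. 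That is circular, so instead I would start from $U't$ as in \e{upx}, differentiate or multiply appropriately, and apply $[t^m]$ together with Theorem \ref{sec} to turn every $S_n(k)$ into a $d$-convolution with Stirling cycle numbers $\stira{k}{k-j}$ and their starred variants. The resulting interchange of summations, exactly as in the passage from \e{ukr} to \e{radu}, produces the coefficient $\beta(j)$ defined in the (forthcoming) \e{bj}, plus possibly a starred correction term which, for the parity at hand ($n$ odd so $n+1$ even, $(n+1)/2$ an integer), either survives or — comparing with the already-isolated $2d_{n-2}$ and $-\tfrac12 d_n$ terms on the right — gets absorbed into those. One then multiplies by the shift/normalization $2(n-4)!!$ (versus the $(n-2)!!,\ n!!$ normalizations in $d$), checks that $2(n-4)!!\big/(n-2)!! = 2/(n-2)$, which is where the factor $\tfrac{n-2}2$ on the right of \e{ccj2} comes from, and reads off the equivalence.

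The main obstacle is purely combinatorial: getting the index ranges and the double-factorial normalizations to line up exactly, so that the diagonal terms $j=0$ (and possibly $j=1$) of the $(U')^2$-expansion reproduce precisely the $-\tfrac12 d_n$ and $2d_{n-2}$ already pulled out on the right of \e{ccj2}, leaving the ``genuine'' convolution $\sum_{j=0}^{(n-3)/2}\beta(j)2^jd_{n-2-2j}$ on the left. This is the same kind of careful bookkeeping that occurs in the proof of \e{radu}, and it should succeed for the same reason: Theorem \ref{sec} converts $S_n(k)$ into a finite $d$-convolution with explicit Stirling-cycle coefficients, the $\ell$-sum truncates automatically (as shown in Proposition \ref{pro}), and after interchanging sums the coefficient of each $d_{n-2-2j}$ is a closed finite sum which is the definition of $\beta(j)$. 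I do not expect to need any new identity about the $c_j$ beyond \e{crec} (already encoded in Theorem \ref{sec}); the whole proposition is a ``substitute Proposition \ref{lus} and Theorem \ref{sec} into \e{ccj}, then collect terms'' argument, and the one line I would write in the actual proof is essentially ``expand $(U')^2$ using \e{upx} and Theorem \ref{sec}, interchange summations as in the proof of \e{radu}, and match the $j=0,1$ terms with the right-hand side of \e{ccj}.''
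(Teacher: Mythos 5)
Your plan is essentially the paper's proof: multiply \e{upx} by $U'=U/(U-1)$ to expand $(U')^2t$ in powers of $U$, apply $[t^{n/2}]$ and Theorem \ref{sec}, interchange summations to produce the coefficients $\beta(j)$ (with the starred correction vanishing for odd $n$), and use $2(n-4)!!/(n-2)!!=2/(n-2)$ to account for the factor $\tfrac{n-2}{2}$ on the right of \e{ccj2}. The only slip is a harmless off-by-one in your coefficient extraction ($[t^{n/2-1}](U')^2$ corresponds to $\tfrac14\sum_{x+y=n+2}xc_x\cdot yc_y$, not to $\sum_{x+y=n+1}$), which the careful bookkeeping you already flag as the main task would catch.
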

\begin{proof}
Multiply \e{upx} by $U'=U/(U-1)$ to find
\begin{equation} \label{up}
  (U')^2 t
  =  \sum_{\ell\gqs 2} \frac{1}{\ell}\sum_{k=2}^{\ell}(-1)^{k}\binom{\ell-2}{k-2} U^{k},
\end{equation}
and applying $[t^{n/2}]$ for $n$ in $\Z_{\gqs 0}$ then shows
\begin{equation*}
  \frac 14 \sum_{x+y=n+2} x c_x \cdot y c_y
  = \sum_{\ell= 2}^{n+3} \frac{1}{\ell}\sum_{k=2}^{\ell}(-1)^{k}\binom{\ell-2}{k-2} S_n(k).
\end{equation*}
Copying the treatment of the right side of \e{up2x} requires, for integers $j \gqs 0$,
\begin{align}
  \beta(j) & :=  \sum_{\ell= 2}^{2j+3} \frac{1}{\ell}\sum_{k=2}^\ell (-1)^{k} k \stira{k}{k-j}\binom{\ell-2}{k-2}, \label{bj}\\
  \beta^*(j) & :=  \sum_{\ell= 2}^{2j+3} \frac{1}{\ell}\sum_{k=2}^\ell (-1)^{k} k \stira{k}{k-j}^* \binom{\ell-2}{k-2}, \label{y2m}
\end{align}
with $\beta^*(n/2) := 0$ for $n$ odd, and we find
\begin{equation} \label{mad}
   \frac {n!!}4 \sum_{x+y=n+2} x c_x \cdot y c_y
  = \sum_{j=0}^{n/2} 2^j d_{n-2j}  \beta(j) - 2^{n/2} \beta^*(n/2).
\end{equation}
Solving for the left sides of \e{ccj} and \e{mad} completes the proof.
\end{proof}

\begin{prop} \label{top2}
The equality \e{ccj2} is true for all odd $n\gqs 3$  if and only if
\begin{equation} \label{top3}
  (m+1/2) \hat\rho_m = (-1)^m \beta(m) +2\frac{B_{m+1}}{m+1}+\sum_{j=0}^{m-1} \frac{B_{j+2}}{j+2} \hat\rho_{m-1-j}, \qquad \text{for all $m\gqs 1$.}
\end{equation}
\end{prop}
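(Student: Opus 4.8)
The plan is to establish Proposition \ref{top2} as a chain of \emph{reversible} equivalences
\[
  \text{\e{ccj2} for all odd } n\gqs3 \iff (\star) \iff (\mathrm{I}) \iff \text{\e{top3} for all } m\gqs1,
\]
where $(\star)$ is a convolution identity in $\g_r,\hat\rho_r,\beta(j)$ and $(\mathrm{I})$ is a single generating-function identity. Since the stated result is an \emph{iff}, the point is that every link below is genuinely two-way: each uses either division by a nonzero quantity, coefficientwise matching of formal power series, or the involution $z\mapsto -z$.

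First I would substitute $n=2m+1$ ($m\gqs1$) into \e{ccj2} and clear the double factorials using \e{rg} in the form $d_{2r+1}=2^{r+1/2}\g_r$ and $d_{2r+2}=-2^{r+1}\hat\rho_r$ (with $d_0=1$), where $d_n=n!!\,c_n$. On the left $\sum_j\beta(j)2^j d_{2m-1-2j}$ becomes $2^{m-1/2}\sum_{j=0}^{m-1}\beta(j)\g_{m-1-j}$. On the right $2d_{n-2}=2^{m-1/2}\cdot 2\g_{m-1}$, while splitting the sum over $x+y=2m+1$ with $x$ odd according to the parity index $b=y/2$ isolates the boundary term $b=0$; this term produces $+2^{m-1/2}\g_m$, which cancels the $-d_{2m+1}/2=-2^{m-1/2}\g_m$ contribution, and the remaining $b\gqs1$ terms assemble into $2^{m-1/2}\sum_{i=0}^{m-1}(-1)^i\hat\rho_i\g_{m-1-i}$. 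Dividing by the nonzero factor $2^{m-1/2}$ and writing $p=m-1$, \e{ccj2} becomes the equivalent family
\[
  \sum_{j=0}^{p}\beta(j)\g_{p-j} = \left(p+\tfrac12\right)\!\left(2\g_p + \sum_{i=0}^{p}(-1)^i\hat\rho_i\g_{p-i}\right), \qquad (p\gqs0), \tag{$\star$}
\]
and $(\star)$ for all $p\gqs0$ is equivalent to \e{ccj2} for all odd $n\gqs3$.

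Next I introduce the generating functions $\hat\rho(z)=\sum_{r\gqs0}\hat\rho_r z^r$, $\beta(z)=\sum_{j\gqs0}\beta(j)z^j$, and $\alpha(z)=\sum_{j\gqs0}\alpha(j)z^j$, alongside $g(z)$ from \e{gz}. The weight $p+\tfrac12$ is the action of the operator $z\frac{d}{dz}+\tfrac12$, so $(\star)$ for all $p$ is exactly the coefficientwise form of $\beta(z)g(z)=\bigl(z\frac{d}{dz}+\tfrac12\bigr)\bigl[g(z)(2+\hat\rho(-z))\bigr]$. Dividing by $g(z)$ (legitimate since $\g_0=1$) and applying the Leibniz rule gives
\[
  \beta(z) = \left(\tfrac12 + z\,\frac{g'(z)}{g(z)}\right)\bigl(2+\hat\rho(-z)\bigr) - z\,\hat\rho'(-z).
\]
The crucial observation is that \e{abba} together with \e{hx} yields $\alpha(z)=\tfrac12+z\,g'(z)/g(z)$: the odd Bernoulli numbers vanish, so $g'/g$ is even and supplies precisely the odd-degree part of $\alpha$. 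Hence the identity above is $\beta(z)=\alpha(z)\bigl(2+\hat\rho(-z)\bigr)-z\,\hat\rho'(-z)$, which I call $(\mathrm{I})$; it is equivalent to the whole $(\star)$-family.

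Finally, the evenness of $g'/g$ also gives $\alpha(z)+\alpha(-z)=1$, so substituting $z\mapsto -z$ in $(\mathrm{I})$ — an invertible operation on formal series — produces $\beta(-z)=(1-\alpha(z))(2+\hat\rho(z))+z\hat\rho'(z)$, which rearranges to $z\hat\rho'(z)+\hat\rho(z)-\alpha(z)\hat\rho(z)=\beta(-z)+2\alpha(z)-2$. On the other side, rewriting \e{top3} by \e{abba} (so $B_{m+1}/(m+1)=\alpha(m)$ and $B_{j+2}/(j+2)=\alpha(j+1)$) and reading off generating functions turns it, for $m\gqs1$, into $z\hat\rho'(z)+\hat\rho(z)-\alpha(z)\hat\rho(z)=\beta(-z)+2\alpha(z)$. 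These two identities differ only by the constant $-2$, i.e.\ only in their $z^0$ coefficients; a direct check shows the $z^0$ coefficient of the $z\mapsto -z$ image of $(\mathrm{I})$ holds automatically (both sides equal $-\tfrac13$), so imposing the identity for $m\gqs1$ is exactly \e{top3}. This closes the chain and proves the equivalence. I expect the main obstacle to be the careful bookkeeping of the first step — in particular tracking the $b=0$ boundary term of the odd-$x$ sum, whose cancellation against $-d_n/2$ is what removes $\g_m$ and leaves a clean convolution; the conceptual heart is the relation $\alpha(z)=\tfrac12+z\,g'(z)/g(z)$, which makes $(\mathrm{I})$ and the generating function of \e{top3} exact $z\mapsto -z$ reflections of one another.
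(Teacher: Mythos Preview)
Your argument is correct and follows essentially the same route as the paper: translate \e{ccj2} into a convolution of $\beta$ against $\g$ with the half-integer weight, pass to generating functions, and use the logarithmic derivative $g'/g$ (equivalently your $\alpha(z)=\tfrac12+z\,g'(z)/g(z)$) to bring in the Bernoulli numbers and recover \e{top3}. The paper packages the weight $p+\tfrac12$ via the trick $x^{-1/2}C(x)A(x)=(x^{-1/2}C(x)D(x))'$ rather than the operator $z\frac{d}{dz}+\tfrac12$, and it skips your explicit $z\mapsto -z$ step, but the substance is the same.
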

\begin{proof}
We will use the following generating functions,
\begin{equation*}
  A(x):=\sum_{j\gqs 0} \beta(j) 2^j x^j, \quad C(x):=\sum_{j\gqs 0} d_{2j+1}  x^j, \quad D(x):=\sum_{j\gqs 1}
  \left( \frac{(-1)^j}2 d_{2j} +2 \delta_{j,1}\right) x^j.
\end{equation*}
Then it is straightforward to check that \e{ccj2} being true for all odd $n\gqs 3$ is equivalent to
\begin{equation*}
  x^{-1/2} C(x) A(x) = \left( x^{-1/2} C(x) D(x)\right)',
\end{equation*}
which we may rewrite as
\begin{equation} \label{ppp}
  A(x)=- \frac{D(x)}{2x}+D'(x)+\frac{C'(x)}{C(x)} D(x).
\end{equation}
Recalling \e{rg} and $g(x)$ in \e{gz}, we have
\begin{equation*}
  C(x)=\sum_{j\gqs 0} \g_j 2^{j+1/2} x^j = \sqrt{2} g(2x).
\end{equation*}
Hence, by \e{hx},
\begin{equation*}
  \frac{C'(x)}{C(x)} = 2\frac{g'(2x)}{g(2x)} = \sum_{j\gqs 0}  2^{j+1} \frac{B_{j+2}}{j+2} x^j.
\end{equation*}
Using this in \e{ppp} and comparing powers of $x$ completes the proof.
\end{proof}

Following the steps in the proof of Proposition \ref{pro} lets us express $\beta(j)$ and $\beta^*(j)$ similarly:

\begin{prop} \label{pro2}
We have $\beta(0)=2/3$, $\beta^*(0)=1/6$ and for integers $j \gqs 1$
\begin{align}
  \beta(j) & = \frac{-1}{2j+3} \sum_{r= 0}^j (-1)^r \eu{j+1}{r}\binom{2j+2}{r+2}^{-1}, \label{pb}\\
  \beta^*(j) & = \frac{-1}{2j+3} \sum_{r= 0}^j (-1)^r \eu{j+1}{r}^*\binom{2j+2}{r+2}^{-1}. \label{pb2}
\end{align}
\end{prop}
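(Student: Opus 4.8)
The plan is to run the proof of Proposition~\ref{pro} essentially verbatim, the only structural change being that the binomial coefficient $\binom{\ell-1}{k-1}$ occurring there is everywhere replaced by $\binom{\ell-2}{k-2}$, which costs one in each binomial index at the end.

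First I would massage the inner sum in~\e{bj}. Using $k\stira{k}{k-j}=\stira{k+1}{k-j}-\stira{k}{k-j-1}$, a rearrangement of~\e{stcy}, together with one Pascal step $\binom{\ell-2}{k-3}+\binom{\ell-2}{k-2}=\binom{\ell-1}{k-2}$ and two shifts of the summation index, I expect to get
\begin{equation*}
  \sum_{k} (-1)^{k} k \stira{k}{k-j}\binom{\ell-2}{k-2}
  = \sum_{k} (-1)^{k}  \stira{k+1}{k-j}\binom{\ell-1}{k-1},
\end{equation*}
the exact analogue of~\e{job}. Then, substituting~\e{ir} for $\stira{k+1}{k-j}$, interchanging summations and applying \cite[Eq.~(5.24)]{Knu} after the shift $k\mapsto k-1$ should give
\begin{equation*}
  \sum_{k} (-1)^{k} k \stira{k}{k-j}\binom{\ell-2}{k-2}
  = (-1)^{\ell}\sum_r \eu{j+1}{r}\binom{r+2}{2j+3-\ell},
\end{equation*}
which in particular vanishes for $\ell>2j+3$ (explaining the cut-off in~\e{bj}) and yields $\beta(j)=\sum_r \eu{j+1}{r}\sum_{\ell\gqs 2}\frac{(-1)^\ell}{\ell}\binom{r+2}{2j+3-\ell}$.

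Exactly as in the passage from~\e{op} to~\e{opu}, the constraints $r\lqs j$ (for $\eud{j+1}{r}\neq 0$) and $0\lqs 2j+3-\ell\lqs r+2$ force $\ell\gqs j+1$, which is compatible with $\ell\gqs 2$ precisely when $j\gqs 1$; so for $j\gqs 1$ the $\ell$-sum runs from $j+1$ to $2j+3$. Reindexing by $m=2j+3-\ell$ and applying the partial-fraction identity \cite[Eqns.~(5.40),~(5.41)]{Knu} to $\sum_{m}\frac{(-1)^m}{(2j+1-r)+m}\binom{r+2}{m}$ should collapse the inner sum to $\frac{-(-1)^r}{2j+3}\binom{2j+2}{r+2}^{-1}$, which is~\e{pb}. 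The starred identity~\e{pb2} then follows from the identical computation with~\e{ir2} in place of~\e{ir} (legitimate since $j\gqs 1$). Finally the two exceptional cases $\beta(0)=2/3$ and $\beta^*(0)=1/6$ are obtained by evaluating~\e{bj} and~\e{y2m} directly over $\ell\in\{2,3\}$, using $\stira{2}{2}=\stira{3}{3}=1$, $\stira{2}{2}^*=1/2$, $\stira{3}{3}^*=2/3$.

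I do not expect a genuine obstacle: the argument is structurally identical to Proposition~\ref{pro}. The only delicate point is the bookkeeping of the several index shifts --- a single off-by-one in $\binom{r+2}{2j+3-\ell}$ (versus $\binom{r+1}{2j+2-\ell}$ in the $\alpha$-case) would propagate to the wrong denominator $2j+3$ and the wrong lower index $r+2$ in~\e{pb} --- so I would sanity-check the final formula against the values $\beta(0)$, $\beta(1)$ and the known $\alpha$-case identity. One should also note that the lower bound $k\gqs 2$ in~\e{bj} (rather than $k\gqs 1$ in~\e{aj}) loses nothing under the shifts, since $\stira{1}{1-j}=0$ for $j\gqs 1$.
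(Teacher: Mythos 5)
Your proposal is correct and is exactly what the paper intends: the paper gives no separate proof of Proposition \ref{pro2}, stating only that it follows "the steps in the proof of Proposition \ref{pro}," and your index bookkeeping (the analogue of \e{job}, the shifted application of \cite[Eq. (5.24)]{Knu} giving $(-1)^\ell\binom{r+2}{2j+3-\ell}$, and the partial-fraction collapse to $\frac{-(-1)^r}{2j+3}\binom{2j+2}{r+2}^{-1}$) checks out, as do the base values $\beta(0)=2/3$ and $\beta^*(0)=1/6$.
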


Applying Proposition \ref{val} to \e{pb} also gives
\begin{equation}\label{rut}
  \beta(m)= -\int_{-\infty}^0 t^2 (1-t)^{-2} F_{m+1}(t) \, dt \qquad (m\gqs 1).
\end{equation}

\begin{proof}[Proof of Theorem \ref{ky}]
By Propositions \ref{top}, \ref{ert} and \ref{top2}, we must prove \e{top3}. Subtracting \e{rhj2} from it gives the goal
\begin{equation} \label{goa}
   \alpha^*(m+1)
   =(-1)^m \beta(m) +\frac{B_{m+2}}{m+2} +2\frac{B_{m+1}}{m+1} \qquad (m \gqs 1),
\end{equation}
which no longer involves the sequences $\rho_j$ and $\g_j$. Combine the identity
\begin{equation*}
  \frac 1{(1-t)^3} = \frac{t^2}{(1-t)^2} + \frac t{(1-t)^3} + \frac{2t}{1-t} +1
\end{equation*}
 with the integrals
\begin{align*}
  \int_{-\infty}^0  (1-t)^{-3} F_{m+1}(t) \, dt & = (-1)^{m+1} \alpha^*(m+1), \\
  \int_{-\infty}^0 t^2 (1-t)^{-2} F_{m+1}(t) \, dt & = -\beta(m),\\
  \int_{-\infty}^0 t (1-t)^{-3} F_{m+1}(t) \, dt & = - \frac{B_{m+2}}{m+2}, \\
  \int_{-\infty}^0 t (1-t)^{-1} F_{m+1}(t) \, dt & = \frac{B_{m+1}}{m+1},
\end{align*}
which are \e{xt2}, \e{rut}, \e{abc} and \e{zy} respectively, and
\begin{equation*}
  \int_{-\infty}^0  F_{m+1}(t) \, dt = \left. \frac t{1-t}F_m(t) \right]_{-\infty}^0 =0.
\end{equation*}
Then  \e{goa} follows.
\end{proof}

This completes the proof of Theorem \ref{ky}, showing that the asymptotic expansion coefficients of $\theta_n$ and $\Psi_n$ agree up to an alternating sign. The initial version of this paper asked if there could be a shorter proof, since \e{rb} and \e{ei} are so similar, and it is gratifying that  this challenge has already been taken up. It is also natural to  ask if there are any similar examples, linking a Taylor expansion like \e{rb} to a sum of its reciprocals.

\section{Properties of $\rho_r$ and $\g_r$} \label{proo}
We gather known formulas and recurrences for Ramanujan's coefficients $\rho_r$ and Stirling's coefficients $\g_r$ in this  section, and display the new relations we have uncovered.
As discussed in \cite{odm}, the De Moivre polynomials  $\dm_{n,k}(a_1, a_2, a_3, \dots)$ from \e{bell} are zero when $n<k$, and for $n\gqs k$ then they have the explicit form
 \begin{equation} \label{bell2}
  \dm_{n,k}(a_1, a_2, a_3, \dots) = \sum_{\substack{1j_1+2 j_2+ \dots +mj_m= n \\ j_1+ j_2+ \dots +j_m= k}}
 \binom{k}{j_1 , j_2 ,  \dots , j_m} a_1^{j_1} a_2^{j_2}  \cdots a_m^{j_m}.
\end{equation}
Here $m=n-k+1$ and the sum   is over all possible $j_1$, $j_2$,  \dots , $j_m \in \Z_{\gqs 0}$.

\subsection{Formulas}
The first closed formula for $\g_r$ seems to be due to Perron in \cite[p. 210]{Pe17}. He stated it in terms of coefficients of power series and it is equivalent to
\begin{equation}\label{gmy}
  \g_r = \sum_{k=0}^{2r}  \frac{(2r+2k-1)!!}{(-1)^k k!} \dm_{2r,k}\left( \frac{1}{3}, \frac{1}{4}, \frac{1}{5}, \dots \right),
\end{equation}
which is really the same as \e{vbw2}. This result is also equivalent to \cite[Thm. 2.7]{BM11}. Brassesco and M\'endez gave another formulation in \cite[Thm. 2.1]{BM11}, essentially derived by a change of variables $z \to e^z$, 
\begin{equation}\label{gmy2}
  \g_r = \sum_{k=0}^{2r}  \frac{(2r+2k-1)!!}{(-1)^k k!} \dm_{2r,k}\left( \frac{1}{3!}, \frac{1}{4!}, \frac{1}{5!}, \dots \right).
\end{equation}
The corresponding formulas for $\rho_r$ are shown in \cite[Eqns. (7.1), (7.2)]{OSra}:
\begin{align} \label{rfb}
   \rho_r   = \delta_{r,0} +{} & \sum_{k=0}^{2r+1}  \frac{(2r+2k)!!}{(-1)^k k!} \dm_{2r+1,k}\left( \frac{1}{3}, \frac{1}{4}, \frac{1}{5}, \dots \right) \\
    = - & \sum_{k=0}^{2r+1}  \frac{(2r+2k)!!}{(-1)^k k!} \dm_{2r+1,k}\left( \frac{1}{3!}, \frac{1}{4!}, \frac{1}{5!}, \dots \right). \label{rfb2}
\end{align}

These results \e{gmy} -- \e{rfb2} may be regarded as the basic expressions for $\g_r$ and $\rho_r$. Variations are possible with further  expansions for the De Moivre polynomials  as in Nemes \cite{nem} and  \cite[Sect. 7]{OSra}. The simplest variant uses $r$-associated Stirling numbers.
For integers $n$, $k$ and $r$   with $n$, $k\gqs 0$, $r\gqs 1$, write $\stirb{n}{k}_{\gqs r}$ for the number of ways to partition  $n$ elements into $k$  subsets, each with at least $r$ members. Let $\stira{n}{k}_{\gqs r}$ denote the number of ways to arrange $n$ elements into $k$ cycles, each of length at least $r$. We also set $\stirb{0}{k}_{\gqs r}=\stirb{0}{k}_{\gqs r}=\delta_{k,0}$. These generalize the usual Stirling numbers in the $r=1$ case.
The following formulas are due to Comtet \cite[p. 267]{Comtet} and  Brassesco and M\'endez  \cite[Thm. 2.4]{BM11} respectively,
\begin{equation}\label{ass}
  \g_r  = \sum_{k=0}^{2r} \frac{(-1)^k}{(2r+2k)!!} \stira{2r+2k}{k}_{\!\gqs 3}
  , \qquad
  \g_r  = \sum_{k=0}^{2r} \frac{(-1)^k}{(2r+2k)!!} \stirb{2r+2k}{k}_{\!\gqs 3}.
\end{equation}
Analogously, from \cite[Eqns. (7.26), (7.27)]{OSra},
\begin{align}\label{ass2}
   \rho_r = \delta_{r,0}  +{} & \sum_{k=0}^{2r+1} \frac{(-1)^k}{(2r+2k+1)!!} \stira{2r+2k+1}{k}_{\!\gqs 3},\\
     = - & \sum_{k=0}^{2r+1} \frac{(-1)^k}{(2r+2k+1)!!} \stirb{2r+2k+1}{k}_{\!\gqs 3}. \label{ass3}
\end{align}
See \cite{OSra} for proofs of all of the identities \e{gmy} -- \e{ass3} as well as generalizations.

Exponentiating \e{logg}  produces another formula for $\g_r$, as 
in \cite[Eq. (7.13)]{odm},
\begin{equation} \label{stx2}
  \g_r =  \sum_{k=0}^{r}  \frac{1}{k!} \dm_{r,k}\left(\frac{B_2}{2\cdot 1},0,\frac{B_4}{4\cdot 3},0, \cdots \right).
\end{equation}
There are hints from \e{hatt} below that there could be a similar identity for $\rho_r$.

\subsection{Recurrences and relations}
We have seen in \e{hy} the simple recurrence
\begin{equation} \label{rr}
  \g_0=1, \qquad r \g_r  = \sum_{j=0}^{r-1} \frac{B_{j+2}}{j+2} \g_{r-1-j} \qquad (r\gqs 1).
\end{equation}
This was used by Wrench in
\cite{Wr68} to tabulate values of $\g_r$. The corresponding result for $\rho_r$ appears in \e{rhj2} and it is convenient to state it in terms of
 $\hat\rho_r := \rho_r - \delta_{r,0}$:
\begin{equation} \label{hatt}
  (r+1/2) \hat\rho_r  = \alpha^*(r+1)- \frac{B_{r+2}}{r+2}+\sum_{j=0}^{r-1} \frac{B_{j+2}}{j+2} \hat\rho_{r-1-j} \qquad (r\gqs 0).
\end{equation}
The numbers $\alpha^*(n)$ are given in \e{xt3}, for example. Perhaps they have a simpler Bernoulli number-like formulation.

The coefficients $c_n$ give another route to obtaining $\rho_r$ and $\g_r$. Their definition in \e{crec} is equivalent to the recursion
\begin{equation}\label{erop}
  c_0=1, \quad c_1=2^{1/2}, \quad c_{n+1}=\frac{2^{1/2}}{n+2} c_n -\frac{2^{1/2}}{4} \sum_{j=2}^n c_j c_{n+2-j} \quad (n\gqs 1).
\end{equation}
Then as in \e{rg},
\begin{equation} \label{ink}
  \hat \rho_r = - \G(r+2) c_{2r+2}, \qquad \g_r = 2^{1/2} \frac{\G(r+\tfrac 32)}{\G(\tfrac 12)} c_{2r+1}.
\end{equation}
This method   is given by  Marsaglia in \cite{Mar86} for calculating $\rho_r$ and given by Marsaglia and Marsaglia in \cite{mar2} for calculating  $\g_r$. (They use the similar coefficients of $V^*(-x)=(U(x^2/2)-1)/x$ in \e{vs} instead of the coefficients $c_n$ of $U(t)$.)

Define the following convenient extension of the binomial coefficient $\binom{n}{m}$ for some non-integral $m$ values. With integers $0\lqs k \lqs n$:
$$
\binom{n}{k-\tfrac 12}:= \frac{\G(n+1)\G(\tfrac 12)^2}{\G(k+\tfrac 12)\G(n-k+\tfrac 12)} = \frac{(2n)!!}{(2k-1)!! (2n-2k-1)!!}.
$$
Then substituting \e{ink} in \e{crec} for odd $n=2r+3$ and even $n=2r+2$, provides the relations
\begin{gather}
  \sum_{j=0}^r \binom{r+\tfrac 32}{j+1}\hat \rho_j \g_{r-j}   = -\g_r,  \label{rr2}\\
  \sum_{j=0}^{r-1} \binom{r+1}{j+1}\hat \rho_j \hat \rho_{r-1-j} + \frac 1{2(r+2)}\sum_{j=0}^r \binom{r+2}{j+\tfrac 12} \g_j \g_{r-j}  =-2\hat \rho_{r-1}, \label{rr3}
\end{gather}
respectively, for all $r\gqs 0$.

With \e{ink}, our main theorem is equivalent to
\begin{equation} \label{our}
  \sum_{j=0}^r \left(\binom{r-\tfrac 12}{j}+\frac{(-1)^j}2 \right) \hat \rho_j \g_{r-j}   = -\g_r \qquad (r\gqs 0),
\end{equation}
by Proposition \ref{top} with odd $n=2r+3$. For an even analog of \e{our}, we have by \e{mad} with $n=2r$,
\begin{equation}\label{madder}
  r\sum_{j=0}^{r-1} \binom{r-1}{j} \hat \rho_j \hat \rho_{r-1-j}
  +\frac 12 \sum_{j=0}^{r} \binom{r}{j-\tfrac 12} \g_j \g_{r-j}
  = \beta(r) -\beta^*(r)- \sum_{j=0}^{r-1} \beta(j) \hat \rho_{r-1-j},
\end{equation}
for all $r\gqs 0$ with the numbers $\beta(j)$ and $\beta^*(j)$ given in Proposition \ref{pro2}.

\section{More on the modified Stirling cycle numbers} \label{more}
Using the recursive relation \e{stcy},  the range of
$\stira{n}{k}^*$ in Table \ref{figb} may be extended above and to the right of the initial conditions to find
\begin{equation*}
  \stira{0}{1}^* = \frac{1}{2}, \qquad
  \stira{0}{2}^* = -\frac{5}{12}, \qquad
  \stira{0}{3}^* = \frac{7}{18}, \qquad
  \stira{0}{4}^* = -\frac{1631}{4320}, \qquad \dots.
\end{equation*}
A clear way to compute these numbers is as follows. On row $n=0$ start with the sequence $1$, $\frac 12$, $\frac 13$, $\frac 14, \dots$. On the next row, with $n=1$, make the sequence $(1-\frac 12)/1$, $(\frac 12-\frac 13)/2$,
$(\frac 13-\frac 14)/3$, and so on. For the next row, repeat the procedure by taking differences and dividing by $1$, $2$, $3,$ $\dots$ again. In general, with the numbers in row $n$ labelled $a_{n,m}$ for $m\gqs 0$, we have $a_{0,m}=1/(m+1)$ and
\begin{equation}\label{tan}
  a_{n+1,m}=(a_{n,m}-a_{n,m+1})/(m+1) \qquad (n, m\gqs 0).
\end{equation}
If we define $\omega_n:=a_{n,0}$, the number at the start of the $n$th row,    then it is straightforward to verify that
\begin{equation*}
   \stira{0}{k}^* = (-1)^{k-1} \omega_k \qquad (k\gqs 1).
\end{equation*}
Table \ref{fige}  displays the initial part of the array $a_{n,m}$.
\SpecialCoor
\psset{griddots=5,subgriddiv=0,gridlabels=0pt}
\psset{xunit=0.6cm, yunit=0.6cm}
\psset{linewidth=1pt}
\psset{dotsize=2pt 0,dotstyle=*,arrowsize=2pt 3}
\begin{table}[ht]
\centering
\begin{pspicture}(0,0.5)(10,6) 
\newrgbcolor{pale}{0.98 0.83 0.79}
\newrgbcolor{pale2}{0.99 0.7 0.62}
\pspolygon[linecolor=pale2,fillstyle=solid,fillcolor=pale2]
(-0.2,4.5)(3.8,0.5)(5.2,0.5)(0.2,5.5)

\pspolygon[linecolor=pale,fillstyle=solid,fillcolor=pale]
(-0.5,4.5)(10.5,4.5)(10.5,5.5)(-0.5,5.5)

\rput(0,5){$1$}
\rput(2,5){$\frac 12$}
\rput(4,5){$\frac 13$}
\rput(6,5){$\frac 14$}
\rput(8,5){$\frac 15$}
\rput(10,5){$\cdots$}
\rput(1,4){$\frac 12$}
\rput(3,4){$\frac 1{12}$}
\rput(5,4){$\frac 1{36}$}
\rput(7,4){$\frac 1{80}$}
\rput(9,4){$\cdots$}
\rput(2,3){$\frac 5{12}$}
\rput(4,3){$\frac 1{36}$}
\rput(6,3){$\frac {11}{2160}$}
\rput(8,3){$\cdots$}
\rput(3,2){$\frac {7}{18}$}
\rput(5,2){$\frac {49}{4320}$}
\rput(7,2){$\cdots$}
\rput(4,1){$\frac {1631}{4320}$}
\rput(6,1){$\cdots$}

\end{pspicture}
\caption{Computing the sequence $\omega_n$}
\label{fige}
\end{table}
We see now that the modified Stirling cycle numbers may be expressed as linear combinations of the usual  Stirling cycle numbers: for $n,k \in \Z$ with $n\gqs 0$
\begin{align}
  \stira{n}{k}^* & = \sum_{j=1}^k \stira{0}{j}^* \stira{n}{k-j}\notag \\
  & = \sum_{j=1}^k (-1)^{j-1} \omega_j \stira{n}{k-j}.  \label{rr4}
\end{align}

The Akiyama-Tanigawa algorithm for Bernoulli numbers, as described in \cite{Kan}, is very similar to the  construction in Table \ref{fige}. With the same starting sequence $1$, $\frac 12$, $\frac 13$, $\frac 14, \dots$, it applies the recursion
\begin{equation}\label{tan2}
  a_{n+1,m}=(a_{n,m}-a_{n,m+1})\cdot(m+1) \qquad (n, m\gqs 0),
\end{equation}
multiplying by $m+1$ instead of dividing. The result is that $a_{n,0}$ is equal to the $n$th Bernoulli number $B_n$. 
 As these numbers have well-known important number-theoretic properties,  the similarly constructed sequence $\omega_n$ should also be of interest. Exploring numerically, we find that the denominator of $\omega_n$ factors into primes less than or equal to $n+1$.
The numerators of $\omega_n$ are (up to a sign) the integer sequence $A176599$ in the OEIS
\cite{OEIS}. These numerators have factorizations containing unusually large primes with, for example,
\begin{equation*}
  \text{numerator}(\omega_{20}) = 23 \cdot 29 \cdot 31 \cdot 37 \cdot 41^2 \cdot p, \qquad p \text{ prime} \approx 4.91473 \times 10^{73}.
\end{equation*}
Lastly we note that it appears to be the case that all the numbers $a_{n,m}$ in the array used to construct $\omega_n$ are positive with $\omega_n \to 1/e \approx 0.36788$ as $n \to \infty$.

\appendix
\section{Appendix: Second proof of Theorem \ref{ky}}
All the ideas in this appendix were outlined by the referee and we added some details to fill out the arguments.
Recalling Section \ref{Uu}, define the formal series
\begin{align*}
  f(t) & := \tfrac 12 (U(t)-u(t)) = c_1 t^{1/2}+ c_3 t^{3/2}+c_5 t^{5/2}+ \dots,\\
 g(t) & := \tfrac 12 (U(t)+u(t)) = c_0+ c_2 t +c_4 t^{2}+ \dots,
\end{align*}
with $\tilde{g}(t):=g(-t)$. Also put
\begin{equation*}
  h(t):=2 f'(t)-4 f'(t) g'(t) = U'(t)-u'(t) -\left(U'(t)^2-u'(t)^2 \right).
\end{equation*}
 The following result has a straightforward proof, with some of the computations appearing in Section \ref{fin}, and it shows that $f$, $g$ and $h$ can be used to give a generating function version of \e{ccj}.

\begin{prop} \label{topaa}
The identity \e{ccj} being true for all odd $n\gqs 3$ is equivalent to
\begin{equation}\label{ccjx}
  h(t)= -\tfrac 23 f'(t)-2(f'*\tilde{g}'')(t),
\end{equation}
as a formal power series identity, where $*$ denotes convolution.
\end{prop}

Here we wish to understand $f$, $g$ and $h$ as functions of $t$. Later, in \e{gaw}, \e{gaw2} and \e{gaw3}, it will be found that   the Laplace transforms of these functions essentially give $\G(n)$, $\theta_n$ and $\Psi_n$, respectively.
The plan to prove Theorem \ref{ky} via Proposition \ref{top} is  as follows. Both sides of \e{ccjx} are shown to be  smooth, slowly growing functions of $t>0$, though we do not know initially that they are equal. Their Laplace transforms are computed in Propositions \ref{gu1}, \ref{gu2}, \ref{gu3} and found to agree. It follows that both sides of \e{ccjx} do agree as functions of $t>0$.
Hence their expansions as $t\to 0^+$ are the same and this will prove the desired identity \e{ccj}.

\subsection{Properties of $U(t)$ and $u(t)$} \label{quk}
If we relate the real variables $t$ and $v$ by
\begin{equation*}
  e^{-t}=v e^{1-v},
\end{equation*}
then clearly $t=t(v)=v-\log v-1$ is a function of $v>0$ and may be differentiated any number of times. By the implicit function theorem, $v$ is also a function of $t$ in the two cases $v>1$ and $0<v<1$. We label these solutions $v=U(t)>1$ and $v=u(t)<1$. Then $U(t)$ and $u(t)$ may be differentiated as many times as $t(v)$ and so are smooth for $t>0$. We have the elementary estimates
\begin{equation}\label{ery}
  t+1 < U(t) < 2t+2, \qquad e^{-t-1} < u(t) < e^{-t} \qquad (t>0).
\end{equation}
By implicit differentiation,
\begin{equation*}
  U'(t)=1+1/(U(t)-1), \qquad u'(t)=1-1/(1-u(t)),
\end{equation*}
and hence, as $t \to \infty$, we find $U'(t) \to 1$ and $u'(t) \to 0$.

To understand $U(t)$ and $u(t)$ for small values of $t$, first recall the function $z V(z)$ from \e{vx} which may be seen to be holomorphic in a neighborhood of $z=0$ in $\C$. By the inverse function theorem for holomorphic functions, since the derivative of $z V(z)$ at $z=0$ is nonzero, it has an inverse that is holomorphic  in some neighborhood $\mathcal N$ of $0$. We labelled this inverse as $z V^*(z)$ in Sections \ref{ut} and \ref{Uu}.
Therefore
\begin{equation} \label{term}
  C(z):=1+\sqrt{2} z V^*(-\sqrt{2} z) = \sum_{j=0}^\infty c_j z^j,
\end{equation}
is holomorphic in  $\mathcal N$, with the shown Taylor expansion, and satisfies
\begin{equation*}
  e^{-z^2} = C(z) e^{1-C(z)}.
\end{equation*}
We must have $U(t)=C(\sqrt{t})$ and $u(t)=C(-\sqrt{t})$ for $t>0$, (see \e{Uux} below), and hence,
\begin{equation}\label{pluy}
  U(t) \sim \sum_{j=0}^\infty c_j t^{j/2}, \qquad u(t) \sim \sum_{j=0}^\infty (-1)^j c_j t^{j/2} \qquad \text{as $t\to 0^+$}.
\end{equation}
In the standard asymptotic expansion notation we are using, the left side of \e{pluy}, for example, means: for any integer $m\gqs 1$ there exist positive $\delta_m$ and $\kappa_m$ so that
\begin{equation*}
  \biggl| U(t) - \sum_{j=0}^{m-1} c_j t^{j/2} \biggr| \lqs \kappa_m t^{m/2}
\end{equation*}
for all $t$ with $0<t<\delta_m$.
In particular, since $c_0=1$ and $c_1=\sqrt{2}$,
\begin{equation} \label{Uux}
  U(t)=1+\sqrt{2} t^{1/2} +O(t), \qquad u(t)=1-\sqrt{2} t^{1/2} +O(t) \qquad \text{as $t\to 0^+$},
\end{equation}
and this confirms that $U(t)>1$ and $u(t)<1$ for small $t$ as  expected.

Also $C'(z)= \sum_{j=1}^\infty j c_j z^{j-1}$ holds  in $\mathcal N$. Then $U'(t)=C'(\sqrt{t})/(2\sqrt{t})$, and similarly for $u'(t)$, giving the expected derivatives of \e{pluy}:
\begin{equation}\label{pluy2}
  U'(t) \sim \sum_{j=1}^\infty \tfrac j2 c_j t^{j/2-1}, \qquad u'(t) \sim \sum_{j=1}^\infty (-1)^j \tfrac j2 c_j t^{j/2-1} \qquad \text{as $t\to 0^+$}.
\end{equation}
(In general it is not valid to differentiate asymptotic expansions; see \cite[p. 9]{Olv}.)

\subsection{Laplace transforms of $f$, $g$ and $h$}

The incomplete gamma functions are
\begin{equation*}
  \g(a,z):=\int_0^z e^{-t}t^{a-1}\, dt, \qquad \G(a,z):=\int_z^\infty e^{-t}t^{a-1}\, dt \qquad (\Re(a)>0),
\end{equation*}
with their properties listed in \cite[Chap. 8]{DLMF}. For $\Re(s)>0$ we obtain the Laplace transforms of $U$ and $u$ using the substitution $v-\log v-1=t$:
\begin{align*}
  \int_0^\infty U(t) e^{-st}\, dt  & = \int_1^\infty e^{-sv} v^s e^s (v-1)\, dv = \frac{e^s}{s^{s+2}}\G(s+1,s)+\frac 1s, \\
  \int_0^\infty u(t) e^{-st}\, dt  & = -\int_0^1 e^{-sv} v^s e^s (v-1)\, dv = -\frac{e^s}{s^{s+2}}\g(s+1,s)+\frac 1s.
\end{align*}
Letting $F(s)$ and $G(s)$ denote the Laplace transforms of $f$ and $g$ respectively then gives
\begin{equation}\label{fglap}
  F(s)  = \frac{e^s \G(s+1)}{2 s^{s+2}}, \qquad
  G(s)  = -\frac{e^s \G(s+1)}{2 s^{s+2}} + \frac{e^s \G(s+1,s)}{s^{s+2}} +\frac 1s.
\end{equation}

For the next result we will need Kummer's confluent hypergeometric function $M(a,b,z)$. This is described in \cite[Chap. 13]{DLMF} and has the alternate notation $_1F_1(a;b;z)$.

\begin{prop} \label{gu1}
Write $H(s)$ for the Laplace transform of $h$. For $\Re(s)>0$ with $s$ not an integer,
\begin{equation*}
  H(s) = \pi \cot(\pi s)-2s F(s) M(1,1-s,-s).
\end{equation*}
\end{prop}
\begin{proof}
From our bounds in Section \ref{quk} we have $h(t)=-\tfrac{\sqrt{2}}3 t^{-1/2}+O(t^{1/2})$ as $t\to 0^+$ and $h(t)=O(1)$ as $t\to \infty$ so that $H(s)$ is well defined and holomorphic for $\Re(s)>0$. However
\begin{equation*}
  U'(t)^2 =\frac 1{2t}+O(t^{-1/2}), \qquad  u'(t)^2 =\frac 1{2t}+O(t^{-1/2}) \qquad \text{as $t\to 0^+$},
\end{equation*}
so in breaking up the integral we require
\begin{equation*}
  H(s)= 2\int_0^\infty f'(t) e^{-st}\, dt + \lim_{\epsilon \to 0^+}\left[
 -\int_\epsilon^\infty U'(t)^2 e^{-st}\, dt + \int_\epsilon^\infty u'(t)^2 e^{-st}\, dt\right].
\end{equation*}
The Laplace transform of  $f'$ is $s F(s)-f(0)=s F(s)$, and with the substitution $v-\log v-1=t$,
\begin{equation*}
  H(s)= 2s F(s) + \lim_{\epsilon \to 0^+}\left[
 - \int_{1+\epsilon}^\infty \frac{e^{-s(v-1)} v^{s+1}}{v-1}\, dv +  \int_{1-\epsilon}^0 \frac{e^{-s(v-1)} v^{s+1}}{v-1}\, dv\right].
\end{equation*}
For any holomorphic $\phi$ we have
\begin{equation*}
   \lim_{\epsilon \to 0^+}\int_{1+\epsilon}^{1-\epsilon} \frac{\phi(v)}{v-1}\, dv =\pi i \cdot \phi(1),
\end{equation*}
with the path of integration  a semicircle of radius $\epsilon$ centered at $1$ with positive imaginary part.
Therefore
\begin{equation} \label{hs}
  H(s)= 2s F(s) - \pi i
 - e^s \int_{0}^\infty \frac{e^{-sv} v^{s+1}}{v-1}\, dv,
\end{equation}
for any path of integration that runs along the positive reals but moves above the pole at $v=1$.
With \cite[Eq. 8.6.9]{DLMF} and $\Re(s)>0$, $s\not\in \Z$, this integral satisfies
\begin{align}
   e^s \int_{0}^\infty \frac{e^{-sv} v^{s+1}}{v-1}\, dv & = -e^{\pi i s} \G(s+2) \G(-s-1, s e^{\pi i})\\
   & = -e^{\pi i s} \G(s+2) (\G(-s-1) - \g(-s-1, s e^{\pi i})) \notag\\
   & = -e^{\pi i s} \pi/\sin(\pi(s+2)) + e^{\pi i s} \G(s+2) \g(-s-1, s e^{\pi i}) \notag\\
    & = -\pi\cot(\pi s) -\pi i + e^{\pi i s} \G(s+2) \g(-s-1, s e^{\pi i}), \label{mmm}
\end{align}
where  the functional equation for $\G(s)$ was used, and $s e^{\pi i}$ keeps track of the branch we are on.
From \cite[Eq. 8.5.1]{DLMF},
\begin{equation*}
   \g(-s-1, s e^{\pi i}) = \frac{e^s e^{-\pi i s}}{(s+1) s^{s+1}} M(1,-s,-s).
\end{equation*}
Also $M(1,-s,-s)=M(1,1-s,-s)+1$ by \cite[Eq. 13.3.4]{DLMF}. Consequently, the last term in \e{mmm} can be written as
\begin{align*}
  e^{\pi i s} \G(s+2) \g(-s-1, s e^{\pi i}) & = e^{\pi i s} \G(s+2) \frac{e^s e^{-\pi i s}}{(s+1) s^{s+1}}(1+M(1,1-s,-s))\\
  & = 2s F(s)+2s F(s) M(1,1-s,-s).
\end{align*}
Combining our identities completes the proof.
\end{proof}

\subsection{The right side of \e{ccjx}}
First we need to give a meaning to $\tilde U(t):=U(-t)$ and $\tilde u(t):=u(-t)$ for $t>0$. This case was not considered by Watson in \cite{Wa29}. When $t>0$ we examine
\begin{equation*}
  e^{t}=v e^{1-v}
\end{equation*}
for complex $v$ satisfying $|\Im(v)|<\pi$. Write $v=x+iy=r e^{i \theta}$ for $x,y,r,\theta \in \R$ and $|y|<\pi$, $r > 0$. Then $t=1-v+\log v$ implies $t=1-x+\log r$ and $y=\theta$. Hence $\tan y = \tan \theta =y/x$ and we obtain
\begin{equation} \label{pth}
  x= y \cot y, \qquad t=\tfrac 12 \log\left( y^2 \cot^2 y +y^2\right)+1-y \cot y,
\end{equation}
even functions of $y$.
By the implicit function theorem we find $y$, (and hence $x$), as smooth functions of $t>0$ in the two cases $y>0$ and $y<0$. For definiteness we may set $\tilde U(t)=x+i y$ and $\tilde u(t)=x-i y$ for $y>0$.


\SpecialCoor
\psset{griddots=5,subgriddiv=0,gridlabels=0pt}
\psset{xunit=0.85cm, yunit=0.6cm}
\psset{linewidth=1pt}
\psset{dotsize=2pt 0,dotstyle=*}

\begin{figure}[ht]
\centering
\begin{pspicture}(-6,-3.6)(6,4.6) 

\psline[linecolor=gray](-6,0)(6,0)
\multirput(-5,-0.13)(1,0){11}{\psline[linecolor=gray](0,0)(0,0.26)}
\psline[linecolor=gray](0,-3.6)(0,4.6)
\multirput(-0.1,-3)(0,1){8}{\psline[linecolor=gray](0,0)(0.2,0)}

\rput(-5,-0.5){$_{-5}$}
\rput(-3,-0.5){$_{-3}$}
\rput(-1,-0.5){$_{-1}$}
\rput(5,-0.5){$_{5}$}
\rput(3,-0.5){$_{3}$}
\rput(1,-0.5){$_{1}$}

\rput(-0.4,-2){$_{-2}$}
\rput(-0.3,4){$_{4}$}
\rput(-0.3,2){$_{2}$}

\rput(1,2.4){$g(t)$}
\rput(1.5,1){$f(t)$}
\rput(-3,3){$\pm \Im f(t)$}

\savedata{\mydata}[
{{0., 0.}, {0.04, 0.283472}, {0.08, 0.40178}, {0.12, 0.49317}, {0.16,
  0.570727}, {0.2, 0.639505}, {0.24, 0.702093}, {0.28,
  0.760023}, {0.32, 0.814293}, {0.36, 0.865592}, {0.4,
  0.914424}, {0.44, 0.961166}, {0.48, 1.00611}, {0.52,
  1.04949}, {0.56, 1.09149}, {0.6, 1.13228}, {0.64, 1.17197}, {0.68,
  1.21067}, {0.72, 1.24848}, {0.76, 1.28549}, {0.8, 1.32175}, {0.84,
  1.35732}, {0.88, 1.39227}, {0.92, 1.42664}, {0.96, 1.46047}, {1.,
  1.4938}}, {1.1, 1.57514}, {1.2,
  1.65399}, {1.3, 1.7307}, {1.4, 1.80556}, {1.5, 1.8788}, {1.6,
  1.95061}, {1.7, 2.02114}, {1.8, 2.09054}, {1.9, 2.15892}, {2.,
  2.22639}, {2.1, 2.29302}, {2.2, 2.35889}, {2.3, 2.42407}, {2.4,
  2.48862}, {2.5, 2.55259}, {2.6, 2.61603}, {2.7, 2.67897}, {2.8,
  2.74146}, {2.9, 2.80352}, {3., 2.86519}, {3.1, 2.92649}, {3.2,
  2.98744}, {3.3, 3.04808}, {3.4, 3.10841}, {3.5, 3.16847}, {3.6,
  3.22825}, {3.7, 3.28779}, {3.8, 3.34709}, {3.9, 3.40617}, {4.,
  3.46503}, {4.1, 3.5237}, {4.2, 3.58217}, {4.3, 3.64047}, {4.4,
  3.69859}, {4.5, 3.75654}, {4.6, 3.81435}, {4.7, 3.872}, {4.8,
  3.92951}, {4.9, 3.98688}, {5., 4.04412}, {5.1, 4.10123}, {5.2,
  4.15822}, {5.3, 4.2151}, {5.4, 4.27186}, {5.5, 4.32852}, {5.6,
  4.38507}, {5.7, 4.44153}, {5.8, 4.49788}, {5.9, 4.55414}, {6.,
  4.61031}}
  ]
\dataplot[linecolor=orange,linewidth=0.8pt,plotstyle=line]{\mydata}

\savedata{\mydata}[
{{0,0}, {-0.000200004, 0.02}, {-0.000800071, 0.04}, {-0.00180036, 0.06},
{-0.00320114, 0.08}, {-0.00500278, 0.1}, {-0.00720577, 0.12},
{-0.00981069, 0.14}, {-0.0128182, 0.16}, {-0.0162292, 0.18},
{-0.0200446, 0.2}, {-0.0242654, 0.22}, {-0.0288926, 0.24},
{-0.0339277, 0.26}, {-0.0393719, 0.28}, {-0.0452268, 0.3},
{-0.0514939, 0.32}, {-0.0581751, 0.34}, {-0.065272, 0.36},
{-0.0727867, 0.38}, {-0.0807214, 0.4}, {-0.0890781, 0.42},
{-0.0978594, 0.44}, {-0.107068, 0.46}, {-0.116705, 0.48}, {-0.126776,
0.5}, {-0.153862, 0.55}, {-0.183719,
  0.6}, {-0.216403, 0.65}, {-0.251974, 0.7}, {-0.290504,
  0.75}, {-0.332068, 0.8}, {-0.376751, 0.85}, {-0.424649,
  0.9}, {-0.475864, 0.95}, {-0.530511, 1.}, {-0.588718,
  1.05}, {-0.650623, 1.1}, {-0.716383, 1.15}, {-0.786167,
  1.2}, {-0.860164, 1.25}, {-0.938586, 1.3}, {-1.02167,
  1.35}, {-1.10966, 1.4}, {-1.20287, 1.45}, {-1.3016, 1.5}, {-1.40623,
   1.55}, {-1.51717, 1.6}, {-1.63488, 1.65}, {-1.75988,
  1.7}, {-1.89277, 1.75}, {-2.03423, 1.8}, {-2.18506,
  1.85}, {-2.34616, 1.9}, {-2.51857, 1.95}, {-2.70355, 2.}, {-2.90252,
   2.05}, {-3.11722, 2.1}, {-3.34969, 2.15}, {-3.60241,
  2.2}, {-3.87839, 2.25}, {-4.18135, 2.3}, {-4.5159, 2.35}, {-4.88787,
   2.4}, {-5.30475, 2.45}, {-5.77629, 2.5}}
  ]
\dataplot[linecolor=orange,linewidth=0.8pt,plotstyle=line,linestyle=dashed]{\mydata}

\savedata{\mydata}[
{{0,0}, {-0.000200004, -0.02}, {-0.000800071, -0.04}, {-0.00180036, -0.06},
{-0.00320114, -0.08}, {-0.00500278, -0.1}, {-0.00720577, -0.12},
{-0.00981069, -0.14}, {-0.0128182, -0.16}, {-0.0162292, -0.18},
{-0.0200446, -0.2}, {-0.0242654, -0.22}, {-0.0288926, -0.24},
{-0.0339277, -0.26}, {-0.0393719, -0.28}, {-0.0452268, -0.3},
{-0.0514939, -0.32}, {-0.0581751, -0.34}, {-0.065272, -0.36},
{-0.0727867, -0.38}, {-0.0807214, -0.4}, {-0.0890781, -0.42},
{-0.0978594, -0.44}, {-0.107068, -0.46}, {-0.116705, -0.48},
{-0.126776, -0.5}, {-0.153862, -0.55}, {-0.183719, -0.6}, {-0.216403,
-0.65}, {-0.251974, -0.7}, {-0.290504, -0.75}, {-0.332068, -0.8},
{-0.376751, -0.85}, {-0.424649, -0.9}, {-0.475864, -0.95},
{-0.530511, -1.}, {-0.588718, -1.05}, {-0.650623, -1.1}, {-0.716383,
-1.15}, {-0.786167, -1.2}, {-0.860164, -1.25}, {-0.938586, -1.3},
{-1.02167, -1.35}, {-1.10966, -1.4}, {-1.20287, -1.45}, {-1.3016,
-1.5}, {-1.40623, -1.55}, {-1.51717, -1.6}, {-1.63488, -1.65},
{-1.75988, -1.7}, {-1.89277, -1.75}, {-2.03423, -1.8}, {-2.18506,
-1.85}, {-2.34616, -1.9}, {-2.51857, -1.95}, {-2.70355, -2.},
{-2.90252, -2.05}, {-3.11722, -2.1}, {-3.34969, -2.15}, {-3.60241,
-2.2}, {-3.87839, -2.25}, {-4.18135, -2.3}, {-4.5159, -2.35},
{-4.88787, -2.4}, {-5.30475, -2.45}, {-5.77629, -2.5}}
  ]
\dataplot[linecolor=orange,linewidth=0.8pt,plotstyle=line,linestyle=dashed]{\mydata}

\savedata{\mydata}[
{{-5.77629, -3.34662}, {-5.30475, -2.95888}, {-4.88787, -2.62005},
{-4.5159, -2.32106}, {-4.18135, -2.05501}, {-3.87839, -1.81653},
{-3.60241, -1.60137}, {-3.34969, -1.40617}, {-3.11722, -1.22818},
{-2.90252, -1.06518}, {-2.70355, -0.915315}, {-2.51857, -0.777055},
{-2.34616, -0.649107}, {-2.18506, -0.530381}, {-2.03423, -0.419946},
{-1.89277, -0.317007}, {-1.75988, -0.220877}, {-1.63488, -0.13096},
{-1.51717, -0.0467392}, {-1.40623, 0.032239}, {-1.3016,
  0.106372}, {-1.20287, 0.176012}, {-1.10966, 0.241467}, {-1.02167,
  0.303015}, {-0.938586, 0.3609}, {-0.860164, 0.415342}, {-0.786167,
  0.466535}, {-0.716383, 0.514657}, {-0.650623, 0.559865}, {-0.588718,
   0.602301}, {-0.530511, 0.642093}, {-0.475864,
  0.679356}, {-0.424649, 0.714196}, {-0.376751, 0.746706}, {-0.332068,
   0.776972}, {-0.290504, 0.80507}, {-0.251974, 0.831069}, {-0.216403,
   0.855033}, {-0.183719, 0.877018}, {-0.153862,
  0.897073}, {-0.126776, 0.915244}, {-0.10241, 0.931571}, {-0.0807214,
   0.946089}, {-0.0616714, 0.958829}, {-0.0452268,
  0.969818}, {-0.0313591, 0.979079}, {-0.0200446,
  0.986631}, {-0.0112641, 0.992489}, {-0.00500278,
  0.996664}, {-0.00125017, 0.999167},  {0., 1.}, {0.1,
  1.06652}, {0.2, 1.13274}, {0.3, 1.19868}, {0.4, 1.26433}, {0.5,
  1.32969}, {0.6, 1.39478}, {0.7, 1.45959}, {0.8, 1.52412}, {0.9,
  1.58839}, {1., 1.65239}, {1.1, 1.71614}, {1.2, 1.77962}, {1.3,
  1.84286}, {1.4, 1.90585}, {1.5, 1.9686}, {1.6, 2.03111}, {1.7,
  2.09338}, {1.8, 2.15543}, {1.9, 2.21725}, {2., 2.27886}, {2.1,
  2.34024}, {2.2, 2.40142}, {2.3, 2.4624}, {2.4, 2.52317}, {2.5,
  2.58375}, {2.6, 2.64413}, {2.7, 2.70433}, {2.8, 2.76435}, {2.9,
  2.82418}, {3., 2.88385}, {3.1, 2.94334}, {3.2, 3.00267}, {3.3,
  3.06183}, {3.4, 3.12084}, {3.5, 3.1797}, {3.6, 3.23841}, {3.7,
  3.29697}, {3.8, 3.35539}, {3.9, 3.41367}, {4., 3.47182}, {4.1,
  3.52983}, {4.2, 3.58772}, {4.3, 3.64548}, {4.4, 3.70312}, {4.5,
  3.76065}, {4.6, 3.81806}, {4.7, 3.87535}, {4.8, 3.93254}, {4.9,
  3.98962}, {5., 4.0466}, {5.1, 4.10348}, {5.2, 4.16026}, {5.3,
  4.21694}, {5.4, 4.27353}, {5.5, 4.33003}, {5.6, 4.38644}, {5.7,
  4.44276}, {5.8, 4.499}, {5.9, 4.55515}, {6., 4.61123}}
  ]
\dataplot[linecolor=black,linewidth=0.8pt,plotstyle=line]{\mydata}

\end{pspicture}
\caption{The graphs of $f(t)$ and $g(t)$ \label{fig}}
\end{figure}
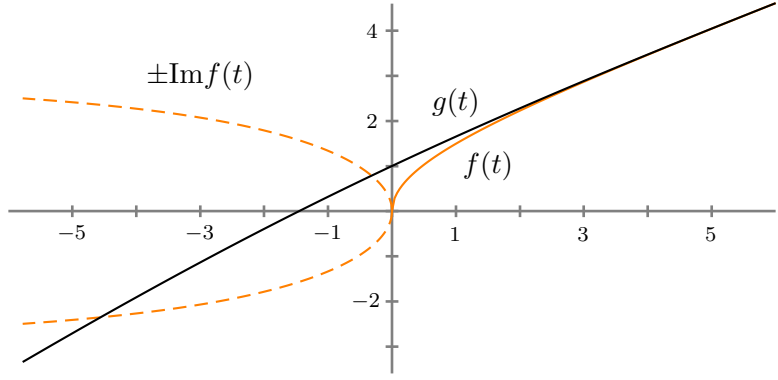


Our interest is in $\tilde{g}(t) := \tfrac 12 (\tilde U(t)+\tilde u(t)) = x$. For large enough $t$ we have the simple estimate
\begin{equation*}
  1-t < \tilde{g}(t) <1-\tfrac 12 t.
\end{equation*}
Also, since $\tilde U'(t)=-1+1/(1-\tilde U(t))$ and $\tilde u'(t)=-1+1/(1-\tilde u(t))$ by implicit differentiation, it follows that as $t \to \infty$ we have $\tilde{g}'(t) \to -1$ and $\tilde{g}''(t) \to 0$.

For small $t>0$ we must have $\tilde U(t) = C(i\sqrt{t})$ and $\tilde u(t) = C(-i\sqrt{t})$.
Therefore
\begin{equation*}
  \tilde{g}(t) \sim \sum_{j \gqs 0, \ j \ \text{even}} (-1)^{j/2}  c_j t^{j/2}
   \qquad \text{as $t\to 0^+$}.
\end{equation*}
In fact it may be seen that $g(t)$ (and  $\tilde{g}(t)$) are holomorphic in a neighborhood of $t=0$. (Since $\phi(z)=C(z)+C(-z)$ is holomorphic and even, we must have $\phi(\sqrt{z})$ well-defined and holomorphic.) It
follows in particular that
\begin{equation} \label{ner}
  \tilde{g}''(t) \sim \sum_{j \gqs 4, \ j \ \text{even}} (-1)^{j/2} \tfrac{j}{2}(\tfrac{j}{2} -1) c_j t^{j/2-2}
   \qquad \text{as $t\to 0^+$}.
\end{equation}
The graphs of $f(t)$ and $g(t)$ are shown in Figure \ref{fig} for $t$ positive and negative. When $t<0$, the function $f(t)=\tfrac 12 (U(t)-u(t))$ takes imaginary values.

\begin{prop} \label{gu2}
Write $\tilde G(s)$ for the Laplace transform of $\tilde g$. For $\Re(s)>0$ with $s$ not an integer,
\begin{equation*}
  \tilde G(s) = -e^{-s}s^{s-2}\cos(\pi s) \G(1-s) +\frac 1s -\frac 1{s^2}+ \frac 1{s^2} M(1,1-s,-s).
\end{equation*}
\end{prop}
\begin{proof}
From our bounds for $\tilde g(t)$, it is clear that $\tilde G(s)$ converges to a holomorphic function  for $\Re(s)>0$. As $\tilde g(t) = \Re (\tilde U(t))$, we have
\begin{equation*}
  \tilde G(s) = \int_0^\infty \tilde g(t) e^{-s t}\, dt = \Re \int_0^\infty \tilde U(t) e^{-s t}\, dt,
\end{equation*}
if $s$ is  a positive real, which is assumed from here on. Substituting $t=1-v+\log v$ finds
\begin{equation*}
  \int_0^\infty \tilde U(t) e^{-s t}\, dt = e^{-s}\int_{\mathcal C} e^{sv}v^{-s}(1-v)\,dv
\end{equation*}
where $\mathcal C$ is the curve in \e{pth}, parameterized by $y \cot y+i y$ for $0<y<\pi$, and going from $v=1$ to $-\infty+i\pi$. This integral over $\mathcal C$ may be computed by means of the antiderivative
\begin{equation*}
  \int e^{sv}v^{-s}(1-v)\,dv = \frac{e^{sv}}{s^2 v^s}\bigl( 1-sv-M(1,1-s,-sv)\bigr),
\end{equation*}
provided $s>0$ is not an integer. For $v$ approaching $-\infty+i\pi$, use  \cite[13.2.23]{DLMF} which says
\begin{equation*}
  M(a,b,z) \sim \frac{\G(b)}{\G(a)} e^z z^{a-b} \qquad \text{as $z \to \infty$}.
\end{equation*}
Precisely, the ratio of both sides tends to $1$ as $z \to \infty$ when $|\arg z|\lqs \pi/2-\delta$ for arbitrary fixed $\delta>0$. It follows that
\begin{equation*}
  \int_0^\infty \tilde U(t) e^{-s t}\, dt =
  -e^{-s}s^{s-2}e^{-\pi i s} \G(1-s) +\frac 1s -\frac 1{s^2} + \frac 1{s^2} M(1,1-s,-s).
\end{equation*}
Taking the real part of this, and then allowing all $s$ with $\Re(s)>0$ by analytic continuation, finishes the proof.
\end{proof}

\begin{prop} \label{gu3}
 For $\Re(s)>0$ with $s$ not an integer, the Laplace transform of the right side of \e{ccjx} is
\begin{equation*}
   \pi \cot(\pi s)-2s F(s) M(1,1-s,-s).
\end{equation*}
\end{prop}
\begin{proof}
By the well-known formulas for Laplace transforms of derivatives and convolutions, the right side of \e{ccjx}
becomes
\begin{equation*}
  -\tfrac 23 s F(s)-2 s F(s) \left(s^2 \tilde G(s) - s \tilde g(0) - \tilde g'(0) \right).
\end{equation*}
We have $\tilde g(0) =c_0 =1$ and $\tilde g'(0) =-c_2 =-2/3$. Inserting the formula from Proposition \ref{gu2} and simplifying completes the proof.
\end{proof}

\subsection{Proof of Theorem \ref{ky} and final remarks} \label{fin}

The next result describes the expansion of a convolution in the case we need.

\begin{lemma} \label{pwq}
Suppose $A(t)$ and $B(t)$ are continuous on some interval $0<t<\delta$ with
\begin{equation*}
  A(t) \sim \sum_{j=0}^\infty a_j t^{j-1/2}, \qquad B(t) \sim \sum_{k=0}^\infty b_k t^{k}  \qquad \text{as $t\to 0^+$}.
\end{equation*}
Then, as $t \to 0^+$,
\begin{equation*}
  (A * B)(t) \sim \sum_{n=0}^\infty d_n t^{n+1/2} \qquad \text{for} \qquad d_n= \sum_{j+k=n} \frac{\G(j+\tfrac 12) \G(k+1)}{\G(j+k+\tfrac 32)} a_j b_k.
\end{equation*}
\end{lemma}
\begin{proof}
It is straightforward to show that
\begin{equation*}
  (A * B)(t) := \int_0^t A(v) \cdot B(t-v)\, dv
\end{equation*}
has an expansion of the stated form. To compute $d_n$, a typical term is
\begin{align*}
  a_j b_k \int_0^t v^{j-1/2}  (t-v)^k\, dv & =   a_j b_k \cdot t^{j-1/2}\cdot t^k \cdot t \int_0^1 w^{j-1/2}  (1-w)^k\, dw \\
   & = a_j b_k \cdot t^{j+k+1/2} \frac{\G(j+\tfrac 12) \G(k+1)}{\G(j+k+\tfrac 32)},
\end{align*}
using the beta function.
\end{proof}

\begin{proof}[Alternate proof of Theorem \ref{ky}]
From Propositions \ref{gu1} and \ref{gu3}, the Laplace transforms of each side of \e{ccjx} give the same holomorphic function of $s$ when $\Re(s)>0$. It follows from Lerch's well-known theorem that the continuous functions on each side of \e{ccjx} must be the same for $t>0$.

Using \e{pluy2} we easily find, as $t \to 0^+$,
\begin{equation} \label{iu}
  h(t) \sim \sum_{n\gqs 3, \ n \text{ odd}} t^{n/2-2} \left( (n-2) c_{n-2} - \sum_{x+y=n} x c_x \cdot y c_y\right),
\end{equation}
where $x$ and $y$ will  mean odd and even nonnegative integers, respectively, for the rest of this proof.  By  \e{pluy2}, \e{ner} and Lemma \ref{pwq},
\begin{equation*}
  (f'*\tilde{g}'')(t) \sim \sum_{x\gqs 1, \ y\gqs 4} \frac{\G(\tfrac x2+1)\G(\tfrac y2+1)}{\G(\tfrac x2+\tfrac y2-1)} c_x \cdot (-1)^{y/2}  c_y  \cdot t^{(x+y)/2-2}.
\end{equation*}
The above sum may be extended to all $x$ and $y$ with $x+y\gqs 3$ provided the sums with $y=0$ and $y=2$ are subtracted. From the identity
\begin{equation*}
  \frac{\G(\tfrac x2+1)\G(\tfrac y2+1)}{\G(\tfrac x2+\tfrac y2-1)} = \frac{x!!y!!}{4(x+y-4)!!},
\end{equation*}
and the values $c_0=1$, $c_2=2/3$, we conclude that, as $t \to 0^+$,
\begin{multline} \label{iu2}
  -\tfrac 23 f'(t)-2(f'*\tilde{g}'')(t) \\
  \sim \sum_{n\gqs 3, \ n \text{ odd}} \frac{t^{n/2-2}}{2(n-4)!!} \left( n!! c_{n} - 2(n-2)!! c_{n-2}-\sum_{x+y=n} (-1)^{y/2} x!! c_x \cdot y!! c_y\right).
\end{multline}
The corresponding coefficients of \e{iu} and \e{iu2} must be equal, proving \e{ccj}  for all odd $n\gqs 3$. Theorem \ref{ky} now follows from Proposition \ref{top}.
\end{proof}

Lastly, we may relate the transforms $F(n)$, $G(n)$ and $H(n)$ with our objects of study $\G(n+1)$, $\theta_n$ and $\Psi_n$, and note a direct way to see their asymptotic expansions as $n \to \infty$.
We have already proved in \e{fglap} that
\begin{equation}\label{gaw}
  \G(n+1)=\frac{2 n^{n+2}}{e^n} F(n),
\end{equation}
for $n>0$. Watson's formula, from \cite[p. 294]{Wa29}, is
\begin{equation*}
  \theta_n=1-\frac n2 \int_{0}^\infty e^{-nt}\left(U'(t)+u'(t) \right)\, dt,
\end{equation*}
and integration   by parts then shows
\begin{equation}\label{gaw2}
  \theta_n=1+n-n^2 G(n).
\end{equation}
From \cite[Eq. (8.6)]{OSra},
\begin{equation*}
  \frac{e^n n!}{n^{n+1}} \Psi_n=-\pi i - e^n \int_{0}^\infty \frac{e^{-nv} v^n}{v-1}\, dv,
\end{equation*}
with the path of integration moving above $1$. Hence, by \e{hs},
\begin{equation}\label{gaw3}
 n! \Psi_n=\frac{n^{n+1}}{e^n } H(n).
\end{equation}

Now Watson's Lemma, in the form \cite[p.~113]{Olv} for example, gives the asymptotics of Laplace transforms such as $F(n)$, when $n \to \infty$, in terms of the expansions of the integrands $f(t)$ as $t \to 0^+$. In this way, with \e{gaw} and \e{pluy}, we obtain Stirling's approximation for $\G(n+1)$ in \e{gmx} and the relation on the right of \e{rg}.  With \e{gaw2}  we similarly obtain the asymptotic expansion of  $\theta_n$ in \e{iou} and the relation on the left of \e{rg}. 
Using \e{gaw3} and \e{iu},  the asymptotic expansion of  $n! \Psi_n$ in \e{gps} is proved along with  \e{rgg}.

{\small \bibliography{ram2-bib} }

{\small 
\vskip 5mm
\noindent
\textsc{Dept. of Math, The CUNY Graduate Center, 365 Fifth Avenue, New York, NY 10016-4309, U.S.A.}

\noindent
{\em E-mail address:} \texttt{cosullivan@gc.cuny.edu}
}

\end{document}

{\small 
\vskip 5mm
\noindent
\textsc{Dept. of Math, The CUNY Graduate Center, 365 Fifth Avenue, New York, NY 10016-4309, U.S.A.}

\noindent
{\em E-mail address:} \texttt{cosullivan@gc.cuny.edu}
}

\begin{lemma}

\end{lemma}
\begin{proof}

\end{proof}

\begin{prop}

\end{prop}
\begin{proof}

\end{proof}

\begin{theorem}

\end{theorem}
\begin{proof}

\end{proof}

\sum_{\substack{0\leqslant h<k  \\ (h,k)=1}}